\newcommand*{\mailto}[1]{\href{mailto:#1}{\nolinkurl{#1}}}
\newcommand{\R}{{\bbR}}
\newcommand{\bbC}{{\mathbb{C}}}
\newcommand{\bbN}{{\mathbb{N}}}
\newcommand{\bbR}{{\mathbb{R}}}
\newcommand{\bbS}{{\mathbb{S}}}
\newcommand{\cB}{{\mathcal B}}
\newcommand{\cC}{{\mathcal C}}
\newcommand{\cH}{{\mathcal H}}
\newcommand{\cM}{{\mathcal M}}
\newcommand{\cX}{{\mathcal X}}
\newcommand{\gQ}{\mathfrak Q}
\newcommand{\gq}{\mathfrak q}
\DeclareMathOperator{\dom}{dom}
\renewcommand{\Re}{\text{\rm Re}}
\renewcommand{\ln}{\text{\rm ln}}
\newcommand{\dott}{\,\cdot\,}
\newcommand{\loc}{\text{\rm{loc}}}
\newcommand{\beq}{\begin{equation}}
\newcommand{\enq}{\end{equation}}
\newcommand{\Om}{\Omega}
\newcommand{\dOm}{{\partial\Omega}}
\newcommand{\LOm}{L^2(\Om;d^nx)}
\newcommand{\LdOm}{L^2(\dOm;d^{n-1} \omega)}
\newcommand{\ga}{\gamma}
\newcommand{\no}{\notag}
\newcommand{\lb}{\label}
\newcommand{\f}{\frac}
\newcommand{\ol}{\overline}
\newcommand{\wti}{\widetilde}
\newcommand{\bi}{\bibitem}
\newcommand{\prodm}{\mu \otimes \mu}
\renewcommand{\ge}{\geqslant}
\renewcommand{\le}{\leqslant}
\let\geq\geqslant
\let\leq\leqslant
\def\theequation{\@arabic\c@equation}
\numberwithin{equation}{section}
\newtheorem{theorem}{Theorem}[section]
\newtheorem{proposition}[theorem]{Proposition}
\newtheorem{lemma}[theorem]{Lemma}
\newtheorem{definition}[theorem]{Definition}
\newtheorem{hypothesis}[theorem]{Hypothesis}
\theoremstyle{remark}
\newtheorem{remark}[theorem]{Remark}
\begin{document}

\title[Heat Kernel Bounds for Elliptic PDEs in Divergence Form]{Heat Kernel 
Bounds for Elliptic Partial Differential Operators in Divergence Form with Robin-Type Boundary Conditions}

\author[F.\ Gesztesy]{Fritz Gesztesy}
\address{Department of Mathematics,
University of Missouri, Columbia, MO 65211, USA}
\email{\mailto{gesztesyf@missouri.edu}}
\urladdr{\url{http://www.math.missouri.edu/personnel/faculty/gesztesyf.html}}

\author[M.\ Mitrea]{Marius Mitrea}
\address{Department of Mathematics,
University of Missouri, Columbia, MO 65211, USA}
\email{\mailto{mitream@missouri.edu}}
\urladdr{\url{http://www.math.missouri.edu/personnel/faculty/mitream.html}}

\author[R.\ Nichols]{Roger Nichols}
\address{Mathematics Department, The University of Tennessee at Chattanooga, 
415 EMCS Building, Dept. 6956, 615 McCallie Ave, Chattanooga, TN 37403, USA}
\email{\mailto{Roger-Nichols@utc.edu}}
\urladdr{\url{http://rogeranichols.jimdo.com/}} 

\author[E.\ M.\ Ouhabaz]{El Maati Ouhabaz}
\address{University of Bordeaux, Institut de Math\'ematiques (IMB), Equipe d'Analyse, 351, 
Cours de la Lib\'eration, 33405 Talence, France}
\email{\mailto{Elmaati.Ouhabaz@math.u-bordeaux1.fr}}
\urladdr{\url{http://www.math.u-bordeaux1.fr/~eouhabaz/}}

\dedicatory{Dedicated with great pleasure to E.\ Brian Davies on the occasion of his 70th birthday.}
\thanks{Work of M.\,M.\ was partially supported by the Simons Foundation Grant $\#$\,281566.} 
\thanks{Work of  E.\,M.\,O.\ was partly supported by the ANR project  "Harmonic Analysis at its Boundaries",  ANR-12-BS01-0013-02.}
\date{\today}
\subjclass[2010]{Primary 35J15, 35J25, 47D06; Secondary 46E35, 47A10, 47D07.}
\keywords{Positivity preserving semigroups, elliptic partial differential operators, 
Robin boundary conditions, heat kernel bounds, Green's function bounds.}

\begin{abstract} 
The principal aim of this short note is to extend a recent result on Gaussian heat kernel 
bounds for self-adjoint $L^2(\Om; d^n x)$-realizations, $n\in\bbN$, $n\geq 2$, 
of divergence form elliptic partial differential expressions $L$ with (nonlocal) Robin-type boundary conditions in bounded Lipschitz domains $\Om \subset \bbR^n$, where
$$
Lu = - \sum_{j,k=1}^n\partial_j a_{j,k}\partial_k u. 
$$
The (nonlocal) Robin-type boundary conditions are then of the form 
$$
\nu\cdot A\nabla u + \Theta\big[u\big|_{\partial\Om}\big]=0 \, \text{ on } \, \partial\Omega,
$$ 
where $\Theta$ represents an appropriate operator acting on Sobolev spaces associated with the 
boundary $\partial \Om$ of $\Om$, and $\nu$ denotes the outward pointing normal unit vector on 
$\partial\Om$. 
\end{abstract}

\maketitle


\section{Introduction}  \lb{s1}

This  note represents an addendum to the recent paper \cite{GMN13} devoted to a 
new class of self-adjoint realizations $L_{\theta, \Om}$ in 
$L^2(\Om; d^n x)$ of elliptic partial differential expressions in divergence form, 
\begin{equation} 
L = - \sum_{j,k=1}^n\partial_ja_{j,k}\partial_k,   \lb{1.0} 
\end{equation} 
on bounded Lipschitz domains 
$\Om \subset \bbR^n$, $n\geq 2$, with Robin boundary conditions of the form 
$\nu \cdot A\nabla u + \theta\big(u\big|_{\partial\Om}\big)= 0$. 
(Here $\nu$ denotes the outward pointing 
normal unit vector and $\theta$ is a suitable function on the boundary $\partial \Om$ of $\Om$.) Following \cite{GM09}, we put particular emphasis in \cite{GMN13} on developing a theory of nonlocal Robin boundary conditions where the function $\theta$ on $\partial \Om$ is replaced by a suitable operator $\Theta$ acting in $L^2(\partial \Om; d^{n-1} \omega)$, with $d^{n-1} \omega$ representing the surface measure on $\partial \Om$. 
(More precisely, $\Theta$ acts in appropriate Sobolev spaces 
on the boundary of $\Om$). The resulting self-adjoint operator in $L^2(\Om; d^n x)$ 
is then denoted by $L_{\Theta, \Om}$ and we study its resolvent and semigroup, proving a Gaussian heat kernel bound and a bound for the Green's function of $L_{\Theta, \Om}$. 

To keep this note short, we will refer the reader to the detailed paper \cite{GMN13}, especially, 
we refer to the extensive introduction and long list of references contained therein. In particular, we will only reproduce that material from \cite{GMN13} that is absolutely necessary to read this note. 

In Section \ref{s2} we provide a bit of background and restate the principal result of \cite{GMN13} 
and then our current improvement based on a natural, additional condition. Section \ref{s3} then provides some concrete illustrations. 

Finally, we briefly summarize some of the notation used in this paper: 
Let $\cH$ be a separable complex Hilbert space, $(\cdot,\cdot)_{\cH}$ the scalar product in $\cH$
(linear in the second argument), and $I_{\cH}$ the identity operator in $\cH$.

Next, if $T$ is a linear operator mapping (a subspace of) a Hilbert space into another, then 
$\dom(T)$ and $\ker(T)$ denote the domain and kernel (i.e., null space) of $T$. 
The closure of a closable operator $S$ is denoted by $\ol S$. 
The spectrum, essential spectrum, discrete spectrum, and resolvent set 
of a closed linear operator in a Hilbert space will be denoted by $\sigma(\cdot)$, 
$\sigma_{\rm ess}(\cdot)$, $\sigma_{\rm d}(\cdot)$, and $\rho(\cdot)$, respectively. 

The Banach spaces of bounded and compact linear operators on a separable complex Hilbert space 
$\cH$ are denoted by $\cB(\cH)$ and $\cB_\infty(\cH)$, respectively. 
The analogous notation $\cB(\cX_1,\cX_2)$, $\cB_\infty (\cX_1,\cX_2)$ will be used for 
bounded and compact operators between two Banach spaces $\cX_1$ and $\cX_2$. 

Given a $\sigma$-finite measure space, $(M,\cM,\mu)$, the product measure on 
$M \times M$ will be denoted by $\prodm$. Without loss of generality, we also denote the completion 
of the product measure space $(M\times M, \cM \otimes \cM, \prodm)$ by the same symbol and always 
work with this completion in the following. 

For $a, b \in \bbC^n$, we use the Euclidean pairing $\langle a, b \rangle_{\bbC^n} = 
\sum_j a_j b_j = (\ol a, b)_{\bbC^n}$.

\section{Nonlocal Robin boundary conditions}\lb{s11}

We start by recalling our basic notation on positivity preserving/improving operators. 

\begin{hypothesis} \lb{h2.1}
Let $(M,\cM,\mu)$ denote a $\sigma$-finite, separable measure space associated with a nontrivial positive
measure $($i.e., $0 < \mu(M) \leq \infty$$)$.  
\end{hypothesis}

The set of nonnegative elements $0 \leq f \in L^2(M; d\mu)$ (i.e., $f(x) \geq 0$ $\mu$-a.e.) is a cone 
in $L^2(M; d\mu)$, closed in the norm and weak topologies. 

\begin{definition} \lb{d2.2} 
Let $A$ be a bounded linear operator in $L^2(M; d\mu)$. Then $A$ is called {\it positivity preserving} 
$($resp., {\it positivity improving}$)$ if
\begin{equation}
0 \neq f \in L^2(M;d\mu), \, f \geq 0 \text{ implies } \, A f \geq 0 \, 
\text{ $($resp., $Af > 0$$)$}.
\end{equation}
Given two bounded operators $A$ and $B$ on $L^2(M; d\mu)$ such that $A$ is positivity preserving, we say that $B$ is dominated by $A$ if 
\begin{equation}
  | Bf | \le A |f |,\quad  f \in L^2(M;d\mu). 
 \end{equation}
\end{definition}
Here and in the rest of this paper, all the inequalities (and equalities) are understood in the $\mu$-a.e. sense. 

Turning our attention to integral operators in $L^2(M; d\mu)$ with associated integral kernels 
$A(\cdot,\cdot)$ on $M \times M$, we assume that
\begin{equation}
\text{$A(\cdot,\cdot): M \times M \to \bbC$ is $\prodm$-measurable,} 
\end{equation}  
and introduce the integral operator $A$ associated with the 
integral kernel $A(\cdot,\cdot)$ as follows: 
\begin{equation} 
(A f)(x):= \int_M A(x,y) f(y) \, d\mu(y) \text{ for $\mu$-a.e.\ $x\in M$, } \, f\in L^2(M; d\mu). 
\end{equation} 
This means that $ A(x,\cdot) f(\cdot)$ is absolutely integrable over $M$ for $\mu$-a.e.\ $x \in M$ and 
$\int_M A(\cdot,y) f(y) \, d\mu(y) \in L^2(M; d\mu)$. 

Suppose that $A$ is bounded on $L^2(M;d\mu)$. Then it is a classical fact that $A$ is positivity preserving 
if and only if 
\begin{equation}
A(\cdot,\cdot) \geq 0 \;\, \prodm \text{-a.e.\ on } M\times M.    \lb{2.13} 
\end{equation} 
Similarly, if $B(x,y)$ denotes the integral kernel of an integral operator $B$
that is bounded on $L^2(M;d\mu)$, then $B$ is dominated by $A$ if and only if 
\begin{equation}
| B(\cdot,\cdot) | \le A(\cdot,\cdot)  \;\, \prodm \text{-a.e.\ on } M\times M.    \lb{2.13d} 
\end{equation} 

Next we briefly turn to the basics for divergence form elliptic partial differential 
operators with (nonlocal) Robin-type boundary conditions in $n$-dimensional, 
bounded, Lipschitz domains, corresponding to differential expressions $L$ given by 
\begin{equation}\label{L-def1}
Lu:= - \sum_{j,k=1}^n\partial_j a_{j,k} \partial_k u. 
\end{equation}

For basic facts on Sobolev spaces on $\Om$ or  $\dOm$ and Dirichlet and Neumann trace operators, as well as the choice of notation used below, we refer to \cite[Appendix\ A]{GMN13}. For the basics on sesquilinear forms and operators associated with them we refer to \cite {Da89}, \cite {Ou05} and  \cite[Appendix\ B]{GMN13}. 

In the remainder of this section we make the following assumption:

\begin{hypothesis} \lb{h4.1} Let $n\in\bbN$, $n\geq 2$. \\
$(i)$ Assume that $\Om\subset{\bbR}^n$ is a bounded Lipschitz domain. \\
$(ii)$ Suppose that the matrix 
\begin{equation}\label{igc-Ts.55}
A(\cdot) = (a_{j,k}(\cdot))_{1\leq j,k\leq n}
\end{equation} 
satisfies $A\in L^{\infty}(\Om; d^nx)^{n\times n}$ and is real symmetric 
a.e.\ on $\Om$. In addition, given $0 < a_0 < a_1 < \infty$, assume that $A$ satisfies the uniform ellipticity conditions
\begin{equation}
a_0 I_n \leq A(x) \leq a_1 I_n \, \text{ for a.e.\ $x \in \Om$.}
\end{equation} 
\end{hypothesis}

Above $I_n$ represents the identity matrix in $\bbC^n$ and we will denote the identity operators 
in $\LOm$ and $\LdOm$ by $I_{\Om}$ and $I_{\dOm}$, respectively. Also, in the sequel, the
sesquilinear form 
\begin{equation}
\langle \dott, \dott \rangle_{s} ={}_{H^{s}(\dOm)}\langle\dott,\dott
\rangle_{H^{-s}(\dOm)}\colon H^{s}(\dOm) \times H^{-s}(\dOm)
\to \bbC, \quad s\in [0,1],
\end{equation} 
(antilinear in the first, linear in the second factor), will denote the duality
pairing between $H^s(\dOm)$ and 
\begin{equation}\lb{3.2}
H^{-s}(\dOm)=\big(H^s(\dOm)\big)^*,\quad s\in [0,1],
\end{equation} 
such that 
\begin{align}\lb{3.3}
\begin{split} 
& \langle f,g\rangle_s=\int_{\dOm} d^{n-1}\omega(\xi)\,\ol{f(\xi)} g(\xi),  
\\
& \text{whenever }\,\,f,g\in L^2(\dOm;d^{n-1}\omega),
\end{split} 
\end{align}
where $d^{n-1}\omega$ stands for the surface measure on $\dOm$.

One observes that the inclusion
\begin{equation}\label{inc-1}
\iota:H^{s_0}(\Omega)\hookrightarrow \bigl(H^r(\Omega)\bigr)^*,\quad
s_0>-1/2, \; r>1/2,
\end{equation} 
is well-defined and bounded. 

Next, we wish to describe a weak version of the normal trace operator associated
with $L$ in \eqref{L-def1}, considered in a bounded Lipschitz domain. 
To set the stage, assume Hypothesis \ref{h4.1} and introduce the weak Neumann trace operator 
\begin{equation}\label{2.8}
\wti\ga_N\colon\big\{u\in H^{s+1/2}(\Om)\,\big|\,Lu\in H^{s_0}(\Om)\big\}
\to H^{s-1}(\dOm),\quad s_0> -1/2,
\end{equation} 
as follows: Given $u\in H^{s+1/2}(\Om)$ with $Lu\in H^{s_0}(\Om)$
for some $s_0>-1/2$ and $s\in(0,1)$, we set (with $\iota$ as in
\eqref{inc-1} for $r:=3/2-s>1/2$) 
\begin{align}\label{2.9}
\langle\phi,\wti\ga_N u \rangle_{1-s} & := 
{}_{H^{1/2-s}(\Om)^n}\langle \nabla \Phi, A \nabla u\rangle_{(H^{1/2-s}(\Om)^n)^*}
\nonumber\\  
& \quad  - {}_{H^{3/2-s}(\Om)}\langle\Phi,\iota(Lu)\rangle_{(H^{3/2-s}(\Om))^*},
\end{align} 
for all $\phi\in H^{1-s}(\dOm)$ and $\Phi\in H^{3/2-s}(\Om)$ such that
$\ga_D\Phi=\phi$, where we denoted  
the Dirichlet trace operator by $\gamma_D$. We recall that 
\begin{equation}\label{2.8BBB}
\wti\ga_N\colon\big\{u\in H^{1}(\Om)\,\big|\,Lu\in H^{s_0}(\Om)\big\}
\to H^{-1/2}(\dOm),\quad s_0>-1/2,
\end{equation} 
is well-defined, linear, and bounded.

\vskip 0.08in

Moving on, we take up the task of describing the precise conditions 
that we impose on the nonlocal Robin boundary operator $\Theta$. 

\begin{hypothesis} \lb{h3.2}
Assume Hypothesis \ref{h4.1}, suppose that $\delta>0$ is a given number, and assume 
that $\Theta\in\cB\big(H^{1/2}(\dOm),H^{-1/2}(\dOm)\big)$
is a self-adjoint operator which can be written as 
\begin{equation}\label{Filo-1}
\Theta=\Theta^{(1)}+\Theta^{(2)}+\Theta^{(3)},
\end{equation} 
where $\Theta^{(j)}$, $j=1,2,3$, have the following properties: There exists 
a closed sesquilinear form $\gq_{\dOm}^{(0)}$ in $\LdOm$,
with domain $H^{1/2}(\dOm)\times H^{1/2}(\dOm)$, which is bounded from below
by $c_{\dOm}\in\bbR$ such that if $\Theta_{\dOm}^{(0)} \geq c_{\dOm}I_{\dOm}$ denotes
the self-adjoint operator in $\LdOm$ uniquely associated with $\gq_{\dOm}^{(0)}$, then 
$\Theta^{(1)}=\wti\Theta_{\dOm}^{(0)}$, the extension
of $\Theta_{\dOm}^{(0)}$ to an operator in $\cB\big(H^{1/2}(\dOm),H^{-1/2}(\dOm)\big)$. 
In addition,  
\begin{equation}\label{Filo-2}
\Theta^{(2)}\in\cB_{\infty}\big(H^{1/2}(\dOm),H^{-1/2}(\dOm)\big),
\end{equation} 
whereas $\Theta^{(3)}\in\cB\big(H^{1/2}(\dOm),H^{-1/2}(\dOm)\big)$ satisfies  
\begin{equation}\label{Filo-3}
\big\|\Theta^{(3)}\big\|_{\cB(H^{1/2}(\dOm),H^{-1/2}(\dOm))}<\delta.
\end{equation}
\end{hypothesis}

The self-adjoint realization of the differential expression \eqref{L-def1} equipped
with nonlocal Robin type boundary conditions associated with an operator $\Theta$ 
as above is recorded below. 

\begin{theorem} [\cite{GMN13}] \lb{t3.4}
Assume Hypothesis \ref{h3.2}, where the number $\delta>0$ is taken
to be sufficiently small relative to the Lipschitz character of $\Om$, more precisely, 
suppose that $0<\delta \leq \f{1}{6} \|\gamma_D\|^{-2}_{\cB(H^1(\Om),H^{1/2}(\dOm))}$.
In addition, consider the sesquilinear 
form $\gQ_{\Theta,\Om}(\dott,\dott)$ defined on $H^1(\Om) \times H^1(\Om)$ by 
\begin{align} \lb{3.8}
\begin{split} 
\gQ_{\Theta,\Om} (u,v):=\int_{\Om} d^nx\,
\bigl\langle\overline{(\nabla u)(x)},A(x)(\nabla v)(x)\bigr\rangle_{\bbC^n}
+\big\langle\gamma_D u,\Theta\gamma_D v \big\rangle_{1/2},&   \\
u,v\in H^1(\Om).&  
\end{split} 
\end{align} 
Then the form $\gQ_{\Theta,\Om}(\dott,\dott)$ 
in \eqref{3.8} is symmetric, $H^1(\Om)$-bounded, bounded from below, and closed 
in $L^2(\Om;d^n x)$. The self-adjoint operator $L_{\Theta,\Om}$ uniquely associated with 
$\gQ_{\Theta,\Om}$ on $L^2(\Om;d^nx)$ is then given by 
\begin{align}
& L_{\Theta,\Om}= L,    \lb{3.10BBB} \\
&  \dom(L_{\Theta,\Om})=
\big\{u\in H^1(\Om) \,\big|\,Lu\in L^2(\Om;d^nx),    
\, \big(\wti\gamma_N+\Theta\gamma_D\big)u=0 
\text{ in $H^{-1/2}(\dOm)$}\big\},   \no
\end{align} 
and is self-adjoint and bounded from below on $L^2(\Om;d^nx)$. Moreover, 
\begin{equation}\lb{3.11BBB}
\dom\big(|L_{\Theta,\Om}|^{1/2}\big)=H^1(\Om),
\end{equation} 
and $L_{\Theta,\Om}$, has purely discrete spectrum bounded from below. In particular, 
\begin{equation}\lb{3.12}
\sigma_{\rm ess}(L_{\Theta,\Om})=\emptyset.
\end{equation} 
\end{theorem}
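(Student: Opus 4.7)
The plan is to verify the form-theoretic hypotheses using the decomposition~\eqref{Filo-1}, invoke the First Representation Theorem to produce $L_{\Theta,\Om}$, identify its domain via the weak Neumann trace identity~\eqref{2.9}, and deduce the spectral statement from the Rellich--Kondrachov theorem.

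For the form $\gQ_{\Theta,\Om}$, symmetry follows from the real symmetry of $A$ together with the self-adjointness of $\Theta$, while $H^1(\Om)$-boundedness follows from $A\in L^{\infty}(\Om;d^n x)^{n\times n}$, $\Theta\in\cB(H^{1/2}(\dOm),H^{-1/2}(\dOm))$, and boundedness of $\gamma_D\colon H^1(\Om)\to H^{1/2}(\dOm)$. For lower semiboundedness and closedness I would exploit the splitting~\eqref{Filo-1}: the $\Theta^{(1)}$-contribution equals $\gq_{\dOm}^{(0)}(\gamma_D u,\gamma_D u)\geq c_{\dOm}\|\gamma_D u\|_{L^2(\dOm)}^2$ and is therefore absorbable into a constant multiple of $\|u\|_{L^2(\Om)}^2$; the $\Theta^{(3)}$-contribution satisfies
\[
\bigl|\langle\gamma_D u,\Theta^{(3)}\gamma_D u\rangle_{1/2}\bigr|\leq \delta\|\gamma_D\|^2\|u\|_{H^1(\Om)}^2\leq \tfrac{1}{6}\|u\|_{H^1(\Om)}^2
\]
by~\eqref{Filo-3} and the smallness hypothesis, a fraction that can be absorbed into the ellipticity contribution modulo a constant times $\|u\|_{L^2(\Om)}^2$; finally, since $\Theta^{(2)}\in\cB_{\infty}(H^{1/2}(\dOm),H^{-1/2}(\dOm))$, the standard rigging argument (compact operators between Hilbert spaces give form perturbations with relative bound zero) shows that for every $\e>0$ there exists $C_\e>0$ with $|\langle\gamma_D u,\Theta^{(2)}\gamma_D u\rangle_{1/2}|\leq \e\|u\|_{H^1(\Om)}^2+C_\e\|u\|_{L^2(\Om)}^2$. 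Combining these estimates yields constants $C_0,K>0$ with $\gQ_{\Theta,\Om}(u,u)+K\|u\|_{L^2(\Om)}^2\geq C_0\|u\|_{H^1(\Om)}^2$; together with the upper $H^1$-bound this makes the shifted form norm equivalent to $\|\cdot\|_{H^1(\Om)}$, so closedness of $\gQ_{\Theta,\Om}$ and the lower semibound are inherited from the completeness of $H^1(\Om)$.

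The First Representation Theorem then produces a unique lower-semibounded self-adjoint operator $L_{\Theta,\Om}$ in $L^2(\Om;d^n x)$ with $\dom(|L_{\Theta,\Om}|^{1/2})=H^1(\Om)$, establishing~\eqref{3.11BBB}. To verify~\eqref{3.10BBB}, for $u\in\dom(L_{\Theta,\Om})$ set $f:=L_{\Theta,\Om}u$; testing $(f,v)_{L^2(\Om)}=\gQ_{\Theta,\Om}(u,v)$ against $v\in C_c^\infty(\Om)$ gives $Lu=f$ distributionally, hence $Lu\in L^2(\Om;d^n x)$. For general $v\in H^1(\Om)$, the weak Neumann trace identity~\eqref{2.9} with $s=1/2$, $\Phi=v$, $\phi=\gamma_D v$, combined with the real symmetry of $A$, produces a Green-type identity rewriting the interior term of $\gQ_{\Theta,\Om}(u,v)$; substituting into $(f,v)_{L^2(\Om)}=\gQ_{\Theta,\Om}(u,v)$, exploiting the self-adjointness of $\Theta$, and cancelling the $(Lu,v)$-piece leaves $\langle\gamma_D v,(\wti\gamma_N+\Theta\gamma_D)u\rangle_{1/2}=0$, whereupon surjectivity of $\gamma_D\colon H^1(\Om)\to H^{1/2}(\dOm)$ forces the boundary condition. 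The reverse inclusion follows by reversing this calculation. Finally, Rellich--Kondrachov on the bounded Lipschitz domain $\Om$ yields a compact embedding $H^1(\Om)\hookrightarrow L^2(\Om;d^n x)$, which combined with $\dom(|L_{\Theta,\Om}|^{1/2})=H^1(\Om)$ delivers a compact resolvent by the standard form-domain argument, giving~\eqref{3.12}. I expect the main technical obstacle to lie in the coercivity estimate above: the merely $L^2(\dOm)$-bounded-below form $\gq_{\dOm}^{(0)}$, the compact perturbation $\Theta^{(2)}$, and the small-norm $\Theta^{(3)}$ must be balanced quantitatively against $a_0$ and $\|\gamma_D\|$, and the threshold $\delta\leq \tfrac{1}{6}\|\gamma_D\|^{-2}$ is precisely what makes this absorption succeed.
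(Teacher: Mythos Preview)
The paper does not prove Theorem~\ref{t3.4}: it is quoted verbatim from \cite{GMN13} and no argument is reproduced here, so there is no proof in the present paper to compare against. Your outline follows exactly the standard route one would expect the original proof in \cite{GMN13} to take---verify symmetry and $H^1$-boundedness directly, use the decomposition $\Theta=\Theta^{(1)}+\Theta^{(2)}+\Theta^{(3)}$ together with trace compactness and an Ehrling/KLMN argument to obtain semiboundedness and closedness, invoke the first representation theorem, read off the domain via the weak Green identity built into \eqref{2.9}, and conclude compact resolvent from Rellich--Kondrachov.

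One quantitative point deserves care. Your treatment of $\Theta^{(1)}$ uses only the crude bound $\gq_{\dOm}^{(0)}(\gamma_D u,\gamma_D u)\geq c_{\dOm}\|\gamma_D u\|_{L^2(\dOm)}^2$ and then tries to absorb the $\Theta^{(3)}$ contribution, of size $\tfrac16\|u\|_{H^1(\Om)}^2$, into $a_0\|\nabla u\|_{L^2(\Om)}^2$. As written this absorption fails when $a_0$ is small (nothing in Hypothesis~\ref{h4.1} forces $a_0>1/6$). The fix is to exploit more of Hypothesis~\ref{h3.2}: since $\gq_{\dOm}^{(0)}$ is a closed form on $L^2(\dOm)$ with domain $H^{1/2}(\dOm)$, its form norm is equivalent to the $H^{1/2}(\dOm)$-norm, so $\gq_{\dOm}^{(0)}(\gamma_D u,\gamma_D u)$ actually controls $\|\gamma_D u\|_{H^{1/2}(\dOm)}^2$ up to an $L^2(\dOm)$-correction; this extra coercivity on the boundary is what lets the small-norm piece $\Theta^{(3)}$ be absorbed independently of $a_0$. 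Apart from this refinement, your strategy is sound.
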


In the special case of Neumann boundary conditions (corresponding to $\Theta=0$), 
we use the notation
\begin{equation} \lb{3.13}
\gQ_{N,\Om}(\dott,\dott)= \gQ_{0,\Om}(\dott,\dott), \quad L_{N,\Om}= L_{0,\Om}.
\end{equation}

Next, we briefly comment on the usual case of a local Robin boundary condition, 
that is, the scenario when $\Theta$ is the operator $M_{\theta}$, of pointwise 
multiplication by a real-valued function $\theta$ defined on $\dOm$: 

\begin{lemma} [\cite{GM09}] \lb{l3.6}
Assume Hypothesis \ref{h4.1} and suppose that $\Theta=M_\theta$, the operator
of pointwise multiplication by a real-valued function 
$\theta\in L^p(\dOm;d^{n-1}\omega)$, where
\begin{equation}\label{Fpp}
p=n-1 \,\text{ if } \, n>2,  \, \text{ and } \, p\in(1,\infty] \, \text{ if } \, n=2. 
\end{equation}
Then
\begin{equation}\label{FGN-13}
\Theta\in\cB_{\infty}\big(H^{1/2}(\dOm),H^{-1/2}(\dOm)\big)
\end{equation}
is a self-adjoint operator which satisfies
\begin{equation}\label{FGN-14}
\|\Theta\|_{\cB(H^{1/2}(\dOm),H^{-1/2}(\dOm))}
\leq C\|\theta\|_{L^p(\dOm;d^{n-1}\omega)},
\end{equation}
for some finite constant $C=C(\Om,n,p)\geq 0$. In particular, the present situation 
$\Theta=M_\theta$ subordinates to the case $\Theta^{(2)}$ described in \eqref{Filo-2}.
\end{lemma}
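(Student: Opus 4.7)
The plan is to realize $M_\theta$ as a composition passing through Lebesgue spaces on $\dOm$, extracting both boundedness and compactness from Sobolev embeddings combined with H\"older's inequality. More precisely, one wants to factor
\begin{equation*}
H^{1/2}(\dOm)\hookrightarrow L^{q}(\dOm;d^{n-1}\omega)
\xrightarrow{\,M_\theta\,} L^{q'}(\dOm;d^{n-1}\omega) \hookrightarrow H^{-1/2}(\dOm),
\end{equation*}
where $q'$ is the H\"older conjugate of $q$ and the rightmost embedding is obtained by dualizing the leftmost one (with $L^2(\dOm;d^{n-1}\omega)$ as pivot space). For the $(n-1)$-dimensional Lipschitz manifold $\dOm$, the Sobolev embedding theorem supplies $q=2(n-1)/(n-2)$ when $n\ge 3$, while for $n=2$ the boundary is one-dimensional, $H^{1/2}(\dOm)$ is critical, and the embedding into $L^{q}(\dOm;d^{n-1}\omega)$ holds for every finite $q\ge 2$.

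For the norm bound, for $u,v\in H^{1/2}(\dOm)$ I would apply H\"older's inequality with exponents $(p,q,q)$ satisfying $1/p+2/q=1$ to obtain
\begin{equation*}
\bigl|\langle v,M_\theta u\rangle_{1/2}\bigr|\le \|\theta\|_{L^p(\dOm;d^{n-1}\omega)}\,\|u\|_{L^q(\dOm;d^{n-1}\omega)}\,\|v\|_{L^q(\dOm;d^{n-1}\omega)},
\end{equation*}
and then insert the Sobolev embedding to arrive at \eqref{FGN-14}. The exponent matching is exactly the one dictated by \eqref{Fpp}: for $n\ge 3$ the critical choice $q=2(n-1)/(n-2)$ produces $p=n-1$, while for $n=2$ any finite $q\in[2,\infty)$ produces some $p\in(1,\infty)$, and the limiting case $p=\infty$ is handled trivially by factoring $M_\theta$ through $L^2(\dOm;d^{n-1}\omega)$. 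Self-adjointness of $M_\theta$ in the duality pairing is immediate from $\theta$ being real-valued, since the integral representation of the pairing is then manifestly conjugate-symmetric in $(u,v)$.

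For compactness I would argue by approximation. When $\theta\in L^\infty(\dOm;d^{n-1}\omega)$, the factorization $H^{1/2}(\dOm)\hookrightarrow L^2(\dOm;d^{n-1}\omega)\xrightarrow{M_\theta} L^2(\dOm;d^{n-1}\omega)\hookrightarrow H^{-1/2}(\dOm)$ exhibits $M_\theta$ as compact, since the first arrow is compact by the Rellich--Kondrachov theorem on the Lipschitz manifold $\dOm$. For general $\theta\in L^p(\dOm;d^{n-1}\omega)$ with $p$ as in \eqref{Fpp}, choose bounded $\theta_k\to\theta$ in $L^p$; applying the norm estimate already established to $\theta-\theta_k$ yields $M_{\theta_k}\to M_\theta$ in $\cB\bigl(H^{1/2}(\dOm),H^{-1/2}(\dOm)\bigr)$, and compactness passes to operator-norm limits, giving \eqref{FGN-13}. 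The last assertion of the lemma is then merely bookkeeping in the decomposition \eqref{Filo-1}: one sets $\Theta^{(1)}=0$, $\Theta^{(2)}=M_\theta$ (which satisfies \eqref{Filo-2} by what was just proved), and $\Theta^{(3)}=0$ (which satisfies \eqref{Filo-3} vacuously). The only technical point requiring care is the validity of the critical Sobolev embedding $H^{1/2}(\dOm)\hookrightarrow L^{2(n-1)/(n-2)}(\dOm)$ on a merely Lipschitz boundary together with the subcritical Rellich--Kondrachov compactness; both are classical but should be cited from the same sources on Sobolev spaces on Lipschitz boundaries already invoked in \cite{GMN13}.
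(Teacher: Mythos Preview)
The paper does not give its own proof of this lemma; it simply quotes it from \cite{GM09}. Your argument is correct and is the standard route: factor $M_\theta$ as
\[
H^{1/2}(\dOm)\hookrightarrow L^{q}(\dOm;d^{n-1}\omega)
\xrightarrow{\,M_\theta\,} L^{q'}(\dOm;d^{n-1}\omega)\hookrightarrow H^{-1/2}(\dOm),
\]
match exponents via H\"older and the critical Sobolev embedding on the $(n-1)$-dimensional Lipschitz manifold $\dOm$, and obtain compactness by approximating $\theta$ in $L^p$ by bounded functions together with the compactness of $H^{1/2}(\dOm)\hookrightarrow L^2(\dOm;d^{n-1}\omega)$. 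The exponent bookkeeping ($q=2(n-1)/(n-2)$ giving $p=n-1$ for $n\ge 3$, and arbitrary finite $q$ for $n=2$) is exactly right, and you correctly flag the one point that needs a reference, namely the validity of the critical embedding and Rellich--Kondrachov on Lipschitz boundaries. This is precisely the argument one finds in \cite{GM09}, so there is nothing to add.
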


The $L^2$-realization of $L$ equipped with a Dirichlet boundary condition, 
$L_{D,\Om}$, in $L^2(\Om;d^n x)$ formally corresponds to $\Theta = \infty$. Note that
\begin{align}
& L_{D,\Om}= L,   \no \\
&  \dom(L_{D,\Om})=
\big\{u\in H^1(\Om)\,\big|\, Lu \in L^2(\Om;d^n x), \,
\gamma_D u =0 \text{ in $H^{1/2}(\dOm)$}\big\}    \lb{3.14BBB} \\
& \hspace*{1.8cm} = \big\{u\in H_0^1(\Om)\,\big|\, Lu \in L^2(\Om;d^n x)\big\}. \no 
\end{align} 

The well-known  Beurling-Deny criteria (cf. \cite{Da89}, \cite{Ou05}) allow to prove positivity preserving for the semigroup (and, equivalently, the resolvent) of 
$L_{\Theta, \Omega}$.  In order to achieve this, one assumes that
\begin{equation}
\big\langle\gamma_D |u|,\Theta  \gamma_D |u| \big\rangle_{1/2} \leq 
\big\langle\gamma_D u,\Theta \gamma_D u \big\rangle_{1/2},
\quad u \in H^1(\Om).   \lb{3.18} 
\end{equation}
Under this assumption, one has for $u \in H^1(\Omega)$, 
\begin{equation} 
\gQ_{\Theta,\Om} (| u |, | u |) \le \gQ_{\Theta,\Om} (u,u),
\end{equation} 
which by the first Beurling-Deny criterion is equivalent to positivity preserving of $e^{-t L_{\Theta, \Omega}}$.  
It is well-known  that positivity preserving is valid for $e^{-t L_{D, \Omega}}$ and $e^{-t L_{N, \Omega}}$.

\section{Gaussian Bounds}  \lb{s2}

Retaining Hypothesis \ref{h4.1} throughout this section, we now continue the discussion on divergence form elliptic partial differential operators $L_{\Theta,\Om}$ with nonocal Robin boundary conditions and focus on (Gaussian) heat kernel and Green's function bounds for $L_{\Theta,\Om}$.  

We will use the following heat kernel notation (for $t > 0$, a.e.\ $x, y \in \Om$)
\begin{align}
\begin{split}
&K_{\Theta,\Om} (t,x,y) =  e^{- t L_{\Theta, \Omega}}(x,y),  \quad 
K_{N,\Om} (t,x,y) =  e^{- t L_{N, \Omega}}(x,y),      \\
&K_{D,\Om} (t,x,y) =  e^{- t L_{D, \Omega}}(x,y),       
\end{split}
\end{align}
and similarly for Green's functions (for $z \in\bbC\backslash \bbR$, a.e.\ $x, y \in \Om$), 
\begin{align}
&G_{\Theta,\Om} (z,x,y) = (L_{\Theta, \Omega} - z I_{\Om})^{-1}(x,y),    \quad 
G_{N,\Om} (z,x,y) = (L_{N, \Omega} - z I_{\Om})^{-1}(x,y),    \no \\
&G_{D,\Om} (z,x,y) = (L_{D, \Omega} - z I_{\Om})^{-1}(x,y), \quad x \neq y. 
\end{align}

We recall that for $v \in L^2(\Omega;d^nx)$, $\overline{v}$ denotes 
the complex conjugate of $v$, and for two functions $u$ and $v$, 
the symbol $u.\overline{v} \ge 0$ means that the product of the functions, $u \overline{v}$, is 
nonnegative a.e.\ on $\Omega$.  

To state our the results of this section we need a few preparations: 

Let $\mathfrak{a}$ and $\mathfrak{b}$ be two sesquilinear, accretive, and closed forms
on $H = L^2(M;d\mu)$, and denote by $e^{-tA} $ and $e^{-tB}$ their associated semigroups, respectively. 

\begin{theorem}\lb{t4.2}
Suppose that the semigroup $e^{-tB}$ is positivity preserving and that 
$\dom(\mathfrak{a}) = \dom(\mathfrak{b})$. Then the following assertions are equivalent.  \\[1mm] 
$(i)$ $| e^{-tA} f | \le e^{-t B} |f |$, $f\in L^2(M;d\mu)$, $t\geq 0$. \\[1mm]
$(ii)$ $ \Re (\mathfrak{a}(u,v)) \ge \mathfrak{b}(|u|, |v|)$ for all $u, v \in \dom(\mathfrak{a})$ such that 
$u.\overline{v} \ge 0$.\\[1mm] 
If both semigroups $e^{-tA} $ and $e^{-tB}$ are positivity preserving then assertion $(i)$ is equivalent 
to \\[1mm] 
$(iii)$ $ \Re (\mathfrak{a}(u,v)) \ge \mathfrak{b}(u, v)$ for all nonnegative  $u, v \in \dom(\mathfrak{a})$. 
\end{theorem}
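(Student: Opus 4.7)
The plan is to follow the classical Ouhabaz-style domination criterion, an extension of the Beurling-Deny program in which pointwise domination of two semigroups is characterized by a form inequality between their generators. Before beginning the main argument, I would make one observation: since $e^{-tB}$ is positivity preserving, the first Beurling-Deny criterion applied to $\mathfrak{b}$ yields $|u|\in\dom(\mathfrak{b})$ for every $u\in\dom(\mathfrak{b})$, and in view of $\dom(\mathfrak{a})=\dom(\mathfrak{b})$ this guarantees $|u|,|v|\in\dom(\mathfrak{b})=\dom(\mathfrak{a})$ whenever $u,v\in\dom(\mathfrak{a})$, making all quantities appearing in (ii) meaningful.

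For (i) $\Rightarrow$ (ii), I would use the standard identity
\begin{equation*}
\mathfrak{a}(u,v)=\lim_{t\to 0^+}t^{-1}((I-e^{-tA})u,v)_H,\quad u,v\in\dom(\mathfrak{a}),
\end{equation*}
together with its analogue for $\mathfrak{b}$. Fix $u,v\in\dom(\mathfrak{a})$ with $u\overline{v}\geq 0$ a.e. Then $(u,v)_H=\int_M|u||v|\,d\mu=(|u|,|v|)_H$, while
\begin{equation*}
\Re(e^{-tA}u,v)_H\leq\bigl|(e^{-tA}u,v)_H\bigr|\leq\int_M|e^{-tA}u||v|\,d\mu\leq\int_M(e^{-tB}|u|)|v|\,d\mu=(e^{-tB}|u|,|v|)_H,
\end{equation*}
the last inequality using (i) together with $|v|\geq 0$. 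Subtracting and dividing by $t$, the limit $t\to 0^+$ delivers $\Re\mathfrak{a}(u,v)\geq\mathfrak{b}(|u|,|v|)$.

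For (ii) $\Rightarrow$ (i), I would invoke Ouhabaz's closed-convex-set invariance criterion on the product Hilbert space $H\oplus H$ for the product semigroup $e^{-tA}\oplus e^{-tB}$ (generated by the sum form on $\dom(\mathfrak{a})\oplus\dom(\mathfrak{b})$), applied to the closed convex set
\begin{equation*}
C:=\bigl\{(f,g)\in H\oplus H\,\bigl|\,g\geq 0,\;|f|\leq g\text{ $\mu$-a.e.}\bigr\}.
\end{equation*}
Invariance of $C$ under this product semigroup, started from the initial datum $(f,|f|)\in C$, is precisely (i). To verify the hypotheses of the criterion, I would compute the metric projection $P_C$ explicitly: pointwise, $P_C(f,g)=(f,g)$ on $\{|f|\leq g\}$, $P_C(f,g)=(0,0)$ on $\{|f|+g\leq 0\}$, and
\begin{equation*}
P_C(f,g)=\tfrac{|f|+g}{2}\bigl(f/|f|,\,1\bigr)\quad\text{on }\{|f|>g,\ |f|+g>0\}.
\end{equation*}
That $P_C$ preserves $\dom(\mathfrak{a})\oplus\dom(\mathfrak{a})$ again uses $\dom(\mathfrak{a})=\dom(\mathfrak{b})$ together with closure of this domain under $|\cdot|$ and $(\cdot)^+$. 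Finally, writing $P_C(u,w)=:(u_1,|u_1|)$ and $(v_1,v_2):=(u,w)-(u_1,|u_1|)$, one checks on the active region that $u_1\overline{v_1}\geq 0$ a.e.\ and $v_2=-|v_1|$, so Ouhabaz's inequality
\begin{equation*}
\Re\mathfrak{a}(u_1,v_1)+\Re\mathfrak{b}(|u_1|,-|v_1|)\geq 0
\end{equation*}
becomes precisely (ii) applied to the pair $(u_1,v_1)$.

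For the equivalence (i) $\Leftrightarrow$ (iii) under positivity preservation of both semigroups, I would observe that the nonnegative kernel of $e^{-tA}$ forces $|e^{-tA}f|\leq e^{-tA}|f|$, so (i) reduces to the pointwise bound $e^{-tA}f\leq e^{-tB}f$ for $f\geq 0$. The direction (i) $\Rightarrow$ (iii) then follows by repeating the second paragraph's computation with nonnegative $u,v$, while (iii) $\Rightarrow$ (i) follows from the same invariance argument but with the simpler closed convex cone $\{(f,g):0\leq f\leq g\}$, whose projection is immediate and whose form-inequality condition reduces directly to (iii). The main obstacle is the pointwise case analysis of $P_C$ together with the verification that $P_C$ preserves the form domain, and then the algebraic manipulation showing that the Ouhabaz inequality on the active region of $C$ is exactly the content of (ii).
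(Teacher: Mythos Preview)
Your strategy is exactly the one underlying the paper's proof: the paper simply observes that, since $e^{-tB}$ is positivity preserving, $\dom(\mathfrak{b})=\dom(\mathfrak{a})$ is an ideal of itself (in Ouhabaz's sense), and then invokes \cite[Corollary~3.4 and Theorem~3.7]{Ou96} (or \cite[Theorems~2.21, 2.24]{Ou05}) as black boxes. You are reconstructing those results from first principles via the closed-convex-set invariance criterion. Your direction $(i)\Rightarrow(ii)$ via the difference-quotient formula is clean, and your treatment of $(i)\Leftrightarrow(iii)$ is correct.

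There are, however, two genuine soft spots in your sketch of $(ii)\Rightarrow(i)$. First, the identification $P_C(u,w)=(u_1,|u_1|)$ is \emph{false} on the region $\{|u|\le w\}$, where $P_C$ is the identity and the second component is $w$, not $|u|$; the decomposition $(v_1,v_2)=(u-u_1,\,w-|u_1|)$ you then use is therefore not the correct difference $(u,w)-P_C(u,w)$ globally, and the reduction of Ouhabaz's form inequality to $(ii)$ breaks down as written. One must treat all three regions and exploit that on $\{|u|\le w\}$ the difference vanishes, while on the remaining set one does get the structure you describe. Second, your assertion that closure of $\dom(\mathfrak{a})$ under $|\cdot|$ and $(\cdot)^+$ suffices for $P_C$ to preserve $\dom(\mathfrak{a})\oplus\dom(\mathfrak{a})$ is too quick in the complex case: the first component of the projection on the active region is $\tfrac{(|u|+w)^+}{2}\,\mathrm{sign}(u)$, i.e.\ a truncation of the modulus while retaining the phase. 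Showing this lies in $\dom(\mathfrak{a})$ is precisely the content of the \emph{ideal} property in Ouhabaz's sense (\cite[Definition~2.19, Proposition~2.20]{Ou05}), which is what the paper invokes. In the real-valued case your claim is fine, since then the projection is a lattice operation; for complex functions you need the ideal property. Either supply that argument explicitly, or, as the paper does, cite the Ouhabaz domination theorems directly once the ideal condition has been verified.
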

\begin{proof}[Sketch of proof] Since the semigroup $e^{-tB}$ is positivity preserving it follows from  
\cite[Proposition\ 2.20]{Ou05} that $\dom(\mathfrak{a})$ is an ideal of itself and hence an ideal of 
$\dom(\mathfrak{b})$ because $\dom(\mathfrak{a}) = \dom(\mathfrak{b})$ by hypothesis. We refer to  
\cite{Ou96} and \cite[Definition\ 2.19]{Ou05} for the notion of an ideal in this context. Thus, the 
equivalence of items $(i)$ and $(ii)$ follows from \cite[Corollary\ 3.4]{Ou96} (see also 
\cite[Theorem\ 2.21]{Ou05}). If both semigroups are positivity preserving, the equivalence of items 
$(i)$ and $(iii)$ follows from \cite[Theorem\ 3.7]{Ou96} (see also \cite[Theorem\ 2.24]{Ou05}). 
\end{proof} 

Other criteria for the domination property in terms of forms in assertion $(i)$ for the case where  
$\dom(\mathfrak{a}) \neq \dom(\mathfrak{b})$ are given in \cite{Ou96} and \cite[Ch.\ 2]{Ou05}. The 
equivalence of items $(i)$ and $(iii)$ is also proved in \cite{GMN13}.  

We have the following result: 

\begin{theorem}\lb{t4.3}
Assume Hypothesis \ref{h4.1}, suppose that $\Theta_j$, $j=1,2$, satisfy the assumptions 
introduced in Hypothesis \ref{h3.2}, and denote by $L_{\Theta_j,\Om}$ the operators in 
\eqref{3.10BBB} associated with the sesquilinear forms  $\gQ_{\Theta_j,\Om} (\dott,\dott)$, 
$j=1,2$, defined on $H^1(\Om)\times H^1(\Om)$ according to \eqref{3.8}. Suppose, in addition, that 
$\Theta_1$ satisfies \eqref{3.18} and that 
\begin{equation}
\Re \big(\big\langle\gamma_D u,\Theta_2  \gamma_D v \big\rangle_{1/2}\big) \geq 
\big\langle\gamma_D |u|,\Theta_1 \gamma_D |v|  \big\rangle_{1/2}, \lb{3.100}
\end{equation}
for all $u, v \in H^1(\Omega)$ such that  $u.\overline{v} \ge 0$. Then $e^{-t L_{\Theta_2, \Omega}}$ is dominated by $e^{-t L_{\Theta_1, \Omega}}$,
in the sense that
\begin{equation}
| e^{-t L_{\Theta_2, \Omega}} f | \le e^{-t L_{\Theta_1, \Omega}} |f |, \quad f\in L^2(\Omega;d^nx), 
\; t\geq 0.     \lb{3.101}
\end{equation}
If in addition $\Theta_1 \ge 0$, then 
\begin{equation}
| e^{-t L_{\Theta_2, \Omega}} f | \le e^{-t L_{\Theta_1, \Omega}} |f | \le e^{- t L_{N,\Omega}} |f|, 
\quad f\in L^2(\Omega;d^nx),\; t\geq 0.       \lb{3.102}
\end{equation}
\end{theorem}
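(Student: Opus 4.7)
The plan is to invoke Theorem~\ref{t4.2} twice.  For the bound~\eqref{3.101}, take $\mathfrak{a}:=\gQ_{\Theta_2,\Om}$ and $\mathfrak{b}:=\gQ_{\Theta_1,\Om}$; for the second inequality in~\eqref{3.102}, take $\mathfrak{a}:=\gQ_{\Theta_1,\Om}$ and $\mathfrak{b}:=\gQ_{N,\Om}$.  In both cases Theorem~\ref{t3.4} guarantees $\dom(\mathfrak{a}) = \dom(\mathfrak{b}) = H^1(\Om)$; the semigroup $e^{-tL_{\Theta_1,\Om}}$ is positivity preserving by hypothesis~\eqref{3.18} together with the first Beurling--Deny criterion recalled after~\eqref{3.18}; and $e^{-tL_{N,\Om}}$ is classically positivity preserving (the case $\Theta=0$).

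To establish~\eqref{3.101} I verify condition~(ii) of Theorem~\ref{t4.2}.  The form $\gQ_{\Theta,\Om}(u,v)$ decomposes into the bulk integral $\int_\Om\langle\overline{\nabla u},A\nabla v\rangle_{\bbC^n}\,d^nx$ and the boundary pairing $\langle\gamma_D u,\Theta\gamma_D v\rangle_{1/2}$.  For the bulk piece, given $u,v\in H^1(\Om)$ with $u\overline{v}\ge 0$ a.e., the pointwise phases of $u$ and $v$ align, so off the joint zero set I write $u=|u|e^{i\phi}$ and $v=|v|e^{i\phi}$ for a common measurable phase $\phi$.  Expanding via the product rule and using the real symmetry and nonnegativity of $A$ produces the pointwise identity
\begin{equation*}
\Re\bigl\langle\overline{\nabla u},A\nabla v\bigr\rangle_{\bbC^n}
= \bigl\langle\nabla|u|,A\nabla|v|\bigr\rangle_{\bbR^n}
+ |u||v|\,\bigl\langle\nabla\phi,A\nabla\phi\bigr\rangle_{\bbR^n}
\ge \bigl\langle\nabla|u|,A\nabla|v|\bigr\rangle_{\bbR^n}.
\end{equation*}
Integration over $\Om$ therefore dominates the bulk contribution of $\gQ_{\Theta_1,\Om}(|u|,|v|)$ by the real part of the bulk contribution to $\gQ_{\Theta_2,\Om}(u,v)$.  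The boundary comparison is precisely the assumption~\eqref{3.100}.  Summing the two inequalities verifies condition~(ii), and Theorem~\ref{t4.2} delivers~\eqref{3.101}.

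For the second inequality in~\eqref{3.102}, both $e^{-tL_{\Theta_1,\Om}}$ and $e^{-tL_{N,\Om}}$ are positivity preserving so I may invoke condition~(iii) of Theorem~\ref{t4.2}.  For nonnegative real-valued $u,v\in H^1(\Om)$ the bulk pieces of $\gQ_{\Theta_1,\Om}(u,v)$ and $\gQ_{N,\Om}(u,v)$ are identical, so the required form comparison collapses to $\langle\gamma_Du,\Theta_1\gamma_Dv\rangle_{1/2}\ge 0$ on nonnegative traces.  This nonnegativity I read off from the assumption $\Theta_1\ge 0$ in conjunction with the Beurling--Deny-type compatibility~\eqref{3.18} and the structural decomposition of $\Theta_1$ supplied by Hypothesis~\ref{h3.2}.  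Theorem~\ref{t4.2} then yields $e^{-tL_{\Theta_1,\Om}}|f|\le e^{-tL_{N,\Om}}|f|$ for $f\in L^2(\Om;d^nx)$ and $t\ge 0$, and chaining with~\eqref{3.101} completes~\eqref{3.102}.

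The main technical obstacle is the phase-decomposition identity for the bulk divergence-form integrand, including the standard accommodation of the measurable set on which $u$ or $v$ vanishes and the phase $\phi$ is thereby undefined---on that set one simply assigns $\phi$ arbitrarily, since the added terms carry the factor $|u||v|$ and so vanish regardless.  The remaining care goes into the boundary-positivity verification feeding into condition~(iii) for the second application, where one must descend from the quadratic positivity $\Theta_1\ge 0$ to the off-diagonal pairing on nonnegative elements by exploiting the specific structure of $\Theta_1$ prescribed in Hypothesis~\ref{h3.2}.
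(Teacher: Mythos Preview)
Your overall strategy coincides with the paper's: apply Theorem~\ref{t4.2} twice, split each Robin form into its bulk divergence-form integral and its boundary pairing, and control the boundary terms via~\eqref{3.100} and the extra assumption $\Theta_1\ge 0$. For the second inequality in~\eqref{3.102} the paper is just as brief as you are about passing from $\Theta_1\ge 0$ to the off-diagonal inequality $\langle\gamma_D u,\Theta_1\gamma_D v\rangle_{1/2}\ge 0$ on nonnegative $u,v$.

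Where your argument has a genuine gap is in the justification of the bulk inequality
\[
\Re\int_\Om\bigl\langle\overline{\nabla u},A\nabla v\bigr\rangle_{\bbC^n}\,d^nx
\ \ge\
\int_\Om\bigl\langle\nabla|u|,A\nabla|v|\bigr\rangle_{\bbC^n}\,d^nx
\qquad (u\overline{v}\ge 0).
\]
You argue via a polar decomposition $u=|u|e^{i\phi}$, $v=|v|e^{i\phi}$ with a common phase and then expand by the product rule. The problem is that the phase $\phi$ (equivalently the unimodular function $u/|u|$) need not belong to any Sobolev space, even on $\{u\ne 0\}$: nothing keeps $|u|$ away from zero, and where $|u|$ is small the formal expression $\nabla\phi$ is not locally integrable, so the product-rule expansion $\nabla u=(\nabla|u|)e^{i\phi}+i|u|(\nabla\phi)e^{i\phi}$ is not justified in $H^1$. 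Your remark about the joint zero set does not touch this difficulty. The paper avoids the issue entirely by invoking the chain-rule identity $\partial_k|u|=\Re\bigl(\partial_k u\cdot\text{sign}(\overline u)\bigr)$, which holds for every $u\in H^1(\Om)$ (see \cite[pp.~104--105]{Ou05}) and from which the bulk inequality follows directly without ever introducing $\phi$. You can either switch to that formula, or repair your phase computation by regularising (e.g.\ replace $|u|$ by $(|u|^2+\varepsilon^2)^{1/2}$) and passing to the limit.
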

\begin{proof} 
We have seen at the end of Section \ref{s11} that  $e^{-tL_{\Theta_1, \Omega}}$ is positivity 
preserving. In addition, the forms $\gQ_{\Theta_2,\Om}$ and $\gQ_{\Theta_1,\Om}$ have the same domain $H^1(\Omega)$.  We are now in a position to apply Theorem \ref{t4.2}. 
One notes that $u \in H^1(\Omega)$ implies $| u | \in H^1(\Omega)$, and that 
\begin{equation} \label{dku}
\partial_k | u | = \Re \big( \partial_k u\cdot \text{sign}( \overline {u})\big), \quad 1 \leq k \leq n,
\end{equation} 
where  
\begin{equation}
\text{sign}( \overline {u})(x) = \begin{cases}
\frac{\overline{u(x)}}{| u(x) |}, & \text{if}\ u(x) \not= 0 \\[1mm] 
0, & \text{if}\ u(x) = 0.
\end{cases}
\end{equation} 
Formula \eqref{dku} is well-known (see, e.g.,  \cite[p.\ 104-105]{Ou05}). Using \eqref{dku} one concludes that 
\begin{equation} 
\Re \bigg(\int_{\Om} d^nx\,
\bigl\langle\overline{(\nabla u)(x)},A(x)(\nabla v)(x)\bigr\rangle_{\bbC^n}\bigg) \ge \int_{\Om} d^nx\,
\bigl\langle (\nabla |u|)(x),A(x)(\nabla |v|)(x)\bigr\rangle_{\bbC^n}, \lb{3.1002}
\end{equation}
for $u, v \in H^1(\Omega)$ such that $u.\overline{v} \ge 0$. 
Using \eqref{3.1002} and assumption \eqref{3.100} one infers that assertion $(ii)$ of Theorem \ref{t4.2} holds. An application of Theorem \ref{t4.2} then yields that \eqref{3.101} is satisfied. 

Similarly, again by Theorem \ref{t4.2}, the second inequality in \ref{3.102} holds once we prove that 
\begin{equation}
 \gQ_{\Theta_1,\Om} (u, v) \ge \gQ_{0,\Om} (u,v),
\lb{3.1001}
\end{equation}
for all nonnegative $u, v \in H^1(\Omega)$.  
This inequality follows along the same ideas as above, incorporating the assumption $\Theta_1 \ge 0$. \\
\end{proof}

\begin{remark} \lb{r4.51} 
The same proof shows that $e^{-tL_{D, \Omega}}$ is dominated by $e^{- t L_{\Theta_1, \Omega}}$ if $\Theta_1 \ge 0$.  This domination  is also stated  explicitly in \cite{GMN13}. 
\end{remark}

\begin{remark} \lb{r4.5}
$(i)$ Under the hypotheses of Theorem \ref{t4.3}, all semigroups  $e^{-tL_{D,\Omega}}$,  $e^{-tL_{N,\Omega}}$ and $e^{- t L_{\Theta_1, \Omega}}$ for  $\Theta_1 \ge 0$ are sub-Markovian 
and hence extend to contraction semigroups on $L^\infty(\Om; d^nx)$. In addition, if  $\Theta_2$ is as in \eqref{3.100} then  $e^{- t L_{\Theta_2, \Omega}}$
extends to a contraction semigroup on $L^\infty(\Om; d^nx)$. 
Moreover, all these  semigroups  extend to strongly continuous semigroups on $L^p(\Om; d^nx)$, $p \in [1,\infty)$ (holomorphic semigroups for $p \in (1,\infty)$ in appropriate sectors), see \cite[Chs.\ 2, 3, 7]{Ou05}. \\
$(ii)$ Condition \eqref{3.18} is automatically satisfied in the special case of 
local Robin boundary conditions considered in Lemma \ref{l3.6}. \\
$(iii)$ One can add a potential $ 0 \leq V \in L^1_{\loc}(\Om; d^nx)$ to all operators in 
Theorem \ref{t4.3} by employing a standard sesquilinear form approach described in 
\cite[Remark\ 4.8]{GMN13}. 
\end{remark}

Now we  turn to the principal new results in this note. 

\begin{theorem} \lb{t4.12} Let $\Omega$ be a connected bounded Lipschitz domain in $\R^n$. Assume Hypothesis \ref{h4.1}, suppose that $\Theta$ satisfies  the assumptions 
introduced in Hypothesis \ref{h3.2}, and that
\begin{equation}
\Re \big(\big\langle\gamma_D u,\Theta  \gamma_D v \big\rangle_{1/2}\big) \geq 
0,  \lb{3.100-1}
\end{equation}
for $u, v \in H^1(\Omega)$ with $u. \overline{v} \ge 0$. Then there exist 
finite constants $C > 0$, $ c > 0$ such that $($for $t > 0$, a.e.\ $x, y \in \Om$$)$
\begin{equation}
| K_{\Theta,\Om} (t,x,y) | \leq C \max \big(t^{-n/2}, 1\big) \exp\big[- c |x-y|^2/  t\big].
\lb{3.1006}
\end{equation}
In addition, assuming 
\begin{equation} 
\langle \gamma_D 1, \Theta \, \gamma_D 1 \rangle_{1/2} \not=  0,   \lb{0}
\end{equation} 
where $1$ denotes the constant function with value $1$ on $\Omega$,  
then 
\begin{equation}\label{LLLam} 
\lambda_{1,\Theta,\Omega} = \inf \sigma(L_{\Theta, \Om}) > 0,  
\end{equation} 
and there exist finite constants $C >0$, $c > 0$, 
such that the Robin heat kernel $K_{\Theta, \Om}(t,\cdot,\cdot)$ satisfies 
$($for $t > 0$, a.e.\ $x, y \in \Om$$)$,
\begin{equation} \lb{est}
| K_{\Theta, \Om}(t,x,y)| \le C t^{-n/2} (1+ t)^{n/2} e^{- \lambda_{1,\Theta,\Omega}  t} 
\exp \big[- c |x-y|^2/t\big].  
\end{equation} 
\end{theorem}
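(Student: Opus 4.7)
The plan is to derive \eqref{3.1006} via the domination result of Theorem \ref{t4.3}, then to prove $\lambda_{1,\Theta,\Omega}>0$ by a form--theoretic argument exploiting connectedness of $\Omega$ and hypothesis \eqref{0}, and finally to combine the resulting ultracontractivity with the spectral gap to upgrade \eqref{3.1006} to \eqref{est}. Setting $v=u$ in \eqref{3.100-1} is always permissible since $u\cdot\overline{u}=|u|^2\ge 0$, and it produces $\Re\langle\gamma_D u,\Theta\gamma_D u\rangle_{1/2}\ge 0$; as $\Theta=\Theta^*$, this pairing is real, hence $\langle\gamma_D u,\Theta\gamma_D u\rangle_{1/2}\ge 0$ for every $u\in H^1(\Omega)$, so $\Theta\ge 0$ in the form sense. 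With this observation, \eqref{3.100-1} is precisely hypothesis \eqref{3.100} of Theorem \ref{t4.3} with the choice $\Theta_1:=0$ (which trivially satisfies Hypothesis \ref{h3.2}, condition \eqref{3.18}, and $\Theta_1\ge 0$) and $\Theta_2:=\Theta$. Theorem \ref{t4.3} then yields the pointwise domination $|K_{\Theta,\Omega}(t,\cdot,\cdot)|\le K_{N,\Omega}(t,\cdot,\cdot)$, so that \eqref{3.1006} reduces to the classical Gaussian upper bound for the Neumann heat kernel on a bounded Lipschitz domain, as recorded in \cite{GMN13}.

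Turning to the strict positivity of $\lambda_{1,\Theta,\Omega}$, the form expression \eqref{3.8} combined with uniform ellipticity $a_0 I_n\le A$ and $\Theta\ge 0$ gives $\gQ_{\Theta,\Omega}(u,u)\ge 0$, whence $L_{\Theta,\Omega}\ge 0$ and $\lambda_{1,\Theta,\Omega}\ge 0$. Because $\sigma_{\rm ess}(L_{\Theta,\Omega})=\emptyset$ by Theorem \ref{t3.4}, this infimum is attained at an eigenfunction $\psi\in H^1(\Omega)$. If $\lambda_{1,\Theta,\Omega}=0$, then both nonnegative summands in $\gQ_{\Theta,\Omega}(\psi,\psi)=0$ must vanish; ellipticity forces $\nabla\psi=0$ a.e.\ on the connected domain $\Omega$, so $\psi=c\cdot 1$ with some $c\neq 0$, and the vanishing boundary term yields $|c|^2\langle\gamma_D 1,\Theta\gamma_D 1\rangle_{1/2}=0$, contradicting \eqref{0}. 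Therefore $\lambda_{1,\Theta,\Omega}>0$, establishing \eqref{LLLam}.

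For \eqref{est}, the heat--kernel bound \eqref{3.1006} first translates into the ultracontractive estimate $\|e^{-tL_{\Theta,\Omega}}\|_{L^1\to L^\infty}\le C_1 t^{-n/2}$ for $t\in(0,1]$, which via Riesz--Thorin (or the standard $TT^{*}$ argument using self-adjointness of $e^{-L_{\Theta,\Omega}/2}$) gives $\|e^{-L_{\Theta,\Omega}/2}\|_{L^1\to L^2}=\|e^{-L_{\Theta,\Omega}/2}\|_{L^2\to L^\infty}\le C_2$. The spectral gap just established yields $\|e^{-\tau L_{\Theta,\Omega}}\|_{L^2\to L^2}\le e^{-\lambda_{1,\Theta,\Omega}\tau}$, so decomposing $e^{-tL_{\Theta,\Omega}}=e^{-L_{\Theta,\Omega}/2}\,e^{-(t-1)L_{\Theta,\Omega}}\,e^{-L_{\Theta,\Omega}/2}$ for $t\ge 1$ produces $|K_{\Theta,\Omega}(t,x,y)|\le C_3 e^{-\lambda_{1,\Theta,\Omega}t}$ uniformly in $x,y$. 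Because $\Omega$ has finite diameter $D$, the elementary bound $e^{c|x-y|^2/t}\le e^{cD^2}$ for $t\ge 1$ allows one to insert the Gaussian factor at the cost of a harmless multiplicative constant. For $t\in(0,1]$, estimate \eqref{3.1006} already has the desired shape modulo the elementary factors $e^{-\lambda_{1,\Theta,\Omega}t}\in[e^{-\lambda_{1,\Theta,\Omega}},1]$ and $(1+t)^{n/2}\in[1,2^{n/2}]$. The elementary identity $t^{-n/2}(1+t)^{n/2}=((1+t)/t)^{n/2}\ge 1$ for all $t>0$ splices the two regimes into \eqref{est}. The main obstacle is bookkeeping in this final merger: one has to ensure that the exponential rate $\lambda_{1,\Theta,\Omega}$ survives the passage through the ultracontractive window near $t=1$; boundedness of $\Omega$, self-adjointness, and the sharp spectral gap from the previous step are precisely what render this routine.
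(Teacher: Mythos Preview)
Your proof is correct and follows essentially the same route as the paper: domination by the Neumann semigroup via Theorem \ref{t4.3} with $\Theta_1=0$, the known Gaussian bound for $K_{N,\Omega}$, the form-theoretic contradiction argument for $\lambda_{1,\Theta,\Omega}>0$, and then the large-$t$ improvement using the spectral gap together with the finite diameter of $\Omega$ to reinsert the Gaussian factor. The only cosmetic difference is that for the last step the paper invokes \cite[Lemma~6.5]{Ou05} to obtain the bound $|K_{\Theta,\Omega}(t,x,y)|\le C t^{-n/2}e^{-\lambda_{1,\Theta,\Omega}t}[1+\lambda_{1,\Theta,\Omega}t]^{n/2}$ directly, whereas you carry out the underlying $L^1\!\to\!L^2\!\to\!L^2\!\to\!L^\infty$ factorization by hand; the content is the same.
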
 
\begin{proof} 
On one hand, Theorem \ref{t4.3} and observation \eqref{2.13d} 
imply the following comparison for the Robin and Neumann heat kernels (for $t>0$, a.e.\ $x,y\in\Omega$)
\begin{equation}
| K_{\Theta,\Om} (t,x,y) | \leq K_{N,\Om} (t,x,y).        \lb{3.1003}
\end{equation} 
On the other hand, it is known that on a  bounded Lipschitz domain $\Omega$, 
the Neumann heat kernel $K_{N,\Om} (t,x,y)$ enjoys the Gaussian upper bound 
(for $t > 0$, a.e.\ $x, y \in \Om$) 
\begin{equation}
K_{N,\Om} (t,x,y) \leq C \max \big(t^{-n/2}, 1\big) \exp\big[- c |x-y|^2/  t\big],
\lb{3.1004}
\end{equation}
where $C$ and $c$ are positive finite constants. 
Combining \eqref{3.1004} with \eqref{3.1003}, one obtains \eqref{3.1006}.
By \eqref{3.12} (i.e., the compact embedding of $H^1(\Om)$ into $L^2(\Om)$), $L_{\Theta, \Om}$ 
has purely discrete spectrum. Let $\lambda_{1,\Theta,\Omega}:= \inf \sigma(L_{\Theta, \Om})$. Then $\lambda_{1,\Theta,\Omega}$ is 
the smallest eigenvalue of $L_{\Theta,\Omega}$ and we claim that
$\lambda_{1,\Theta,\Omega}$ is strictly positive. To justify this claim, we reason by 
contradiction and note that if $\lambda_{1,\Theta,\Omega} = 0$ then 
hypothesis \eqref{3.100-1} (with $u = v$)  and the fact that 
$L_{\Theta, \Om} \geq 0$ (cf.\ \eqref{3.8}) would imply the existence of a nonzero 
$u \in{\rm dom}(L_{\Theta, \Om})$ such that 
\begin{equation} 
(u, L_{\Theta, \Om} u)_{L^2(\Omega; d^nx)} 
+ \langle \gamma_D u, \Theta \gamma_D u \rangle_{1/2} = 0.
\end{equation}
Since $\langle \gamma_D u, \Theta \gamma_D u \rangle_{1/2} \ge 0$ by  \eqref{3.100-1}, 
 one concludes that $(u, L_{\Theta, \Om} u)_{L^2(\Omega; d^nx)} = 0$ and 
$ \langle \gamma_D u, \Theta \gamma_D u \rangle_{1/2} = 0$. The first equality together with the 
assumed ellipticity of $L$ implies that $u$ is constant on $\Omega$. The second equality together with \eqref{0} 
then yields the desired contradiction. This proves \eqref{LLLam}.
Next, we improve on \eqref{3.1006} to obtain \eqref{est}. Obviously, we  may consider $t \ge 1$, only. 
By \cite[Lemma\ 6.5]{Ou05} and \eqref{3.1006} we obtain (for $t > 0$, a.e.\ 
$x, y \in \Om$) 
\begin{equation}\label{2}
| K_{\Theta, \Om}(t,x,y) |  \le C  t^{-n/2} e^{-\lambda_{1,\Theta,\Omega}  t} 
[1+ \lambda_{1,\Theta,\Omega} t]^{n/2}.
\end{equation}
Now since $\Omega$ is bounded it has finite diameter. Therefore, 
$|x-y| \le {\rm diam}\,(\Omega)$ for all $x, y \in \Omega$. From \eqref{2} 
we may estimate that (for $t \geq 1$, a.e.\ $x, y \in \Om$) 
\begin{align}
| K_{\Theta, \Om}(t,x,y) | & \leq  C  t^{-n/2} e^{-\lambda_{1,\Theta,\Omega}  t} 
[1+ \lambda_{1,\Theta,\Omega} t]^{n/2}   \no \\
&= C  t^{-n/2} e^{-\lambda_{1,\Theta,\Omega}  t} e^{-c | x - y |^2 / t} e^{c | x - y |^2 / t} 
[1+ \lambda_{1,\Theta,\Omega} t]^{n/2}    \no \\
&\leq C  t^{-n/2} e^{-\lambda_{1,\Theta,\Omega}  t} e^{-c | x - y |^2 / t} e^{c[{\rm diam}\,(\Omega)]^2} 
[1+ \lambda_{1,\Theta,\Omega} t]^{n/2}, 
\end{align}
completing the proof. 
\end{proof} 

\begin{remark} \lb{r4.13} 
$(i)$  From Theorem \ref{t4.12} one  obtains  the following estimates for the Robin Green function. 
For  $\lambda > 0$, and a.e.\ $x, y \in \Om$,
\begin{align}
| G_{\Theta,\Om} (\lambda,x,y) | 
& \leq \begin{cases} C_{a_0,a_1,\lambda,\Omega,n} |x - y|^{2-n}, & n \geq 3, 
\\[6pt]
C_{a_0,a_1,\lambda,\Omega} \big|\ln\big(1 + |x - y|^{-1}\big)\big|, & n=2,  \end{cases} 
\quad x \neq y.    
\end{align}
This follows as usual by writing the resolvent as  the Laplace transform of the semigroup and hence the Green function at $\lambda$ as the Laplace transform of the heat kernel (cf.\ \cite[App.\ C]{GMN13}). \\[1mm] 
$(ii)$  The semigroup $e^{-t L_{\Theta, \Om}}$ is bounded holomorphic on $L^p(\Omega;d^nx)$ 
in the sector $\{z \in \bbC \,|\, |\arg(z)| < \pi/2\}$ for all $p \in [1, \infty)$. In particular, 
the generator of the corresponding semigroup has (minus) spectrum contained in $[0, \infty)$ and is 
$p-$independent. See \cite[Ch.\ 7]{Ou05}. \\[1mm] 
$(iii)$  The operator $L_{\Theta, \Om}$ has a bounded holomorphic functional calculus and one even has a spectral multiplier result, see, \cite[Theorem\ 7.23]{Ou05}. 
\end{remark}

Define the metric $\rho(\cdot,\cdot)$ on $\Omega$ by setting, for each $x,y\in\Omega$,
\begin{align}
\begin{split} 
&\rho(x,y):= \sup\bigg\{\varphi(x) - \varphi(y) \, \bigg| \, 
\varphi \in W^{1,\infty}(\bbR^n),\,\,\text{ real-valued, and} \\ 
& \hspace*{3cm} \sum_{j,k = 1}^n a_{j,k} (\xi) \partial_j \varphi(\xi) \partial_k \varphi (\xi) \le 1 
\text{ for a.e.\ $\xi\in\Omega$} \bigg\}.
\end{split} 
\end{align}

This is the metric associated with the coefficients $a_{j,k}$. By  ellipticity, 
$\rho(\cdot,\cdot)$ is clearly (two-sided, pointwise) comparable with the standard Euclidean metric. 
Following the method in \cite{Ou06} for Schr\"odinger-type operators one can actually derive 
a sharper  estimate using $\rho(\cdot,\cdot)$ instead of  the Euclidean metric. 

\begin{proposition}\label{prS}
Suppose that the assumptions of Theorem \ref{t4.12} are satisfied.  In addition, 
assume that
\begin{equation}\label{4}
\big\langle \gamma_D(e^{\varphi} u), \Theta \, \gamma_D (e^{-\varphi} u) \big\rangle_{1/2}  \geq 
\langle \gamma_D u, \Theta \, \gamma_D u \rangle_{1/2}
\end{equation}
for all real-valued $\varphi \in C^{\infty}(\bbR^n) \cap L^{\infty}(\bbR^n)$. 
Then there exist finite constants $C > 0$, $c > 0$ such that the Robin heat kernel
$K_{\Theta, \Om}(t,\cdot,\cdot)$ 
satisfies $($for $t > 0$, a.e.\ $x, y \in \Om$$)$, 
\begin{align}\label{10}
\begin{split} 
| K_{\Theta, \Om}(t,x,y) |  & \le C' t^{-n/2} e^{-\lambda_{1,\Theta,\Omega} t} 
\big[1 + \lambda_{1,\Theta,\Omega} t +   [\rho(x,y)^2/t \big]^{n/2}   \\
& \quad \times \exp \big[- \rho(x,y)^2/(4t)\big]. 
\end{split} 
\end{align}
\end{proposition}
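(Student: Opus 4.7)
The plan is to implement Davies' exponential perturbation method along the lines of \cite{Ou06}, with hypothesis \eqref{4} playing the role of the mechanism that controls the nonlocal Robin boundary contribution under the exponential twist. Fix a real-valued $\varphi \in C^\infty(\bbR^n) \cap L^\infty(\bbR^n)$ with $\sum_{j,k} a_{j,k}\partial_j\varphi\partial_k\varphi \leq 1$ a.e.\ on $\Om$, and for $\lambda > 0$ introduce the twisted form
$$\gQ_{\Theta,\Om}^{\lambda\varphi}(u,v) := \gQ_{\Theta,\Om}\bigl(e^{\lambda\varphi}u,\, e^{-\lambda\varphi}v\bigr), \quad u,v \in H^1(\Om),$$
whose domain remains $H^1(\Om)$ because multiplication by $e^{\pm\lambda\varphi}$ is a bi-continuous automorphism of $H^1(\Om)$. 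Expanding $\nabla(e^{\pm\lambda\varphi}u)$ and using that $A$ is real symmetric with $\varphi$ real-valued, one checks that the cross terms in the bulk part are purely imaginary when $u=v$, so that combined with \eqref{4} applied with $\lambda\varphi$ in place of $\varphi$ one obtains
$$\Re\gQ_{\Theta,\Om}^{\lambda\varphi}(u,u) \geq \gQ_{\Theta,\Om}(u,u) - \lambda^2\|u\|_{L^2(\Om)}^2 \geq \bigl(\lambda_{1,\Theta,\Om} - \lambda^2\bigr)\|u\|_{L^2(\Om)}^2.$$

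The form $\gQ_{\Theta,\Om}^{\lambda\varphi}$ is closed and sectorial on $H^1(\Om)$, its associated operator equals $L_{\Theta,\Om}^{\lambda\varphi} = e^{-\lambda\varphi}L_{\Theta,\Om}e^{\lambda\varphi}$ with semigroup kernel $K_{\Theta,\Om}^{\lambda\varphi}(t,x,y) = e^{-\lambda\varphi(x)} K_{\Theta,\Om}(t,x,y) e^{\lambda\varphi(y)}$, and the form bound just derived yields
$$\bigl\|e^{-tL_{\Theta,\Om}^{\lambda\varphi}}\bigr\|_{L^2(\Om) \to L^2(\Om)} \leq e^{(\lambda^2 - \lambda_{1,\Theta,\Om})t}.$$
Combining this $L^2$ bound with the on-diagonal estimate \eqref{est}, which via ultracontractivity furnishes $\|e^{-tL_{\Theta,\Om}}\|_{L^2 \to L^\infty} \leq Ct^{-n/4}(1+t)^{n/4}e^{-\lambda_{1,\Theta,\Om} t/2}$, and using the duality identity $(L_{\Theta,\Om}^{\lambda\varphi})^* = L_{\Theta,\Om}^{-\lambda\varphi}$ (which hinges on \eqref{4} being available for both signs of $\varphi$), the composition estimate
$$\bigl\|e^{-tL_{\Theta,\Om}^{\lambda\varphi}}\bigr\|_{L^1 \to L^\infty} \leq \bigl\|e^{-(t/2)L_{\Theta,\Om}^{-\lambda\varphi}}\bigr\|_{L^2\to L^\infty}\, \bigl\|e^{-(t/2)L_{\Theta,\Om}^{\lambda\varphi}}\bigr\|_{L^2\to L^\infty}$$
executed in the style of \cite[Ch.\ 6]{Ou05} produces
$$\bigl\|e^{-tL_{\Theta,\Om}^{\lambda\varphi}}\bigr\|_{L^1 \to L^\infty} \leq C' t^{-n/2}\bigl[1 + (\lambda_{1,\Theta,\Om} + \lambda^2)t\bigr]^{n/2} e^{-(\lambda_{1,\Theta,\Om} - \lambda^2) t}.$$

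Since an $L^1 \to L^\infty$ norm equals the essential supremum of the kernel, this bound translates to
$$|K_{\Theta,\Om}(t,x,y)| \leq e^{\lambda(\varphi(x) - \varphi(y))} \, C' t^{-n/2}\bigl[1 + (\lambda_{1,\Theta,\Om}+\lambda^2)t\bigr]^{n/2}e^{-(\lambda_{1,\Theta,\Om}-\lambda^2)t}.$$
Replacing $\varphi$ by $-\varphi$ (which satisfies the same normalization) and taking the infimum over admissible $\varphi$ realizes $\inf_\varphi(\varphi(x)-\varphi(y)) = -\rho(x,y)$ by the very definition of the intrinsic metric; optimizing the remaining factor $e^{\lambda^2 t - \lambda\rho(x,y)}$ in $\lambda > 0$ by choosing $\lambda = \rho(x,y)/(2t)$ yields exponent $-\rho(x,y)^2/(4t)$, and the term $\lambda^2 t$ transforms into $\rho(x,y)^2/(4t)$ inside the polynomial prefactor, delivering \eqref{10}. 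A routine mollification step allows one to pass between the $W^{1,\infty}$ class in the definition of $\rho$ and the smaller $C^\infty \cap L^\infty$ class required by \eqref{4}.

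The hard part is executing the second step --- upgrading the $L^2 \to L^2$ bound on the twisted semigroup to a $t^{-n/2}$-type $L^1 \to L^\infty$ bound with only the $e^{\lambda^2 t}$ loss. The non-self-adjointness of $\gQ_{\Theta,\Om}^{\lambda\varphi}$ forces the use of the $\pm\varphi$ duality identity, which is exactly where hypothesis \eqref{4} must be symmetric in the sign of $\varphi$; this step also requires carefully tracking the Nash/Sobolev ultracontractivity machinery to ensure that the polynomial in $\lambda^2 t$ which inevitably arises can be absorbed cleanly into the sharp Gaussian exponent after optimization in $\lambda$.
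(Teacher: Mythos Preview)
Your overall architecture is the same as the paper's: Davies' exponential perturbation, the twisted-form lower bound $\Re\gQ_{\Theta,\Om}^{\lambda\varphi}(u,u)\ge(\lambda_{1,\Theta,\Omega}-\lambda^2)\|u\|_2^2$ via hypothesis \eqref{4}, then optimization in $\lambda$ and $\varphi$. The form computation and the final optimization are carried out correctly.

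The gap is precisely the step you flag as ``the hard part.'' You assert an $L^1\to L^\infty$ bound for the \emph{twisted} semigroup of the shape $C t^{-n/2}[1+(\lambda_{1,\Theta,\Omega}+\lambda^2)t]^{n/2}e^{-(\lambda_{1,\Theta,\Omega}-\lambda^2)t}$, but the only ultracontractivity input you invoke is the on-diagonal part of \eqref{est} for the \emph{untwisted} semigroup. Your composition estimate requires $\|e^{-(t/2)L^{\pm\lambda\varphi}}\|_{L^2\to L^\infty}$, which does not follow from $\|e^{-(t/2)L_{\Theta,\Om}}\|_{L^2\to L^\infty}$ without either introducing an inadmissible factor $e^{c\lambda\|\varphi\|_\infty}$ (from conjugating by $e^{\pm\lambda\varphi}$) or actually running the Nash/Sobolev transfer to the twisted form---which you mention but do not perform. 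So as written, the bound you feed into the composition step is not justified.

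The paper closes this gap by a different and shorter route: it uses the \emph{off-diagonal} Gaussian decay already available from Theorem~\ref{t4.12}, together with $|\varphi(x)-\varphi(y)|\le c|x-y|$ (a consequence of the normalization \eqref{5} and ellipticity), to absorb the twist directly at the kernel level. Via Young's inequality one gets
\[
e^{\lambda[\varphi(x)-\varphi(y)]}\,|K_{\Theta,\Om}(t,x,y)|
\le C\,t^{-n/2}\,e^{\lambda^2 t/\delta},
\]
which is an $L^1\to L^\infty$ bound for $S_\lambda(t)$ with only the $e^{c\lambda^2 t}$ loss. This, combined with the $L^2\to L^2$ bound and \cite[Lemma~6.5]{Ou05}, yields \eqref{9} and then \eqref{10}. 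If you want to salvage your route, you would need to show explicitly that the Sobolev inequality underlying \eqref{est} transfers to $\Re\gQ_{\Theta,\Om}^{\lambda\varphi}+\lambda^2\|\cdot\|_2^2$ with the same constant (this uses the two-sided identity $\Re(\text{twisted bulk})=(\text{untwisted bulk})-\lambda^2\int|u|^2\langle A\nabla\varphi,\nabla\varphi\rangle$ together with \eqref{4}), and then rerun the ultracontractivity argument for the twisted form.
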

\begin{proof}
In order to prove estimate \eqref{10}, assuming condition \eqref{4}, we fix 
$\lambda  \in \bbR$ and a real-valued function $\varphi \in W^{1, \infty}(\bbR^n)$ 
such that 
\begin{equation}\label{5}
\sum_{j,k=1}^n a_{j,k} (x) \partial_j \varphi(x) \partial_k \varphi (x) \leq 1 \, 
\text{ for a.e.\ $x\in \Omega$}.
\end{equation}
Following Davies' perturbation method (see, e.g., \cite[Ch.\ 3]{Da89}),
introduce $S_\lambda(t) := e^{\lambda \varphi} 
e^{-t L_{\Theta, \Om}} e^{-\lambda \varphi} $. This semigroup has 
integral kernel given by 
\begin{equation} 
e^{\lambda( \varphi(x) - \varphi(y))}  K_{\Theta, \Om}(t,x,y)\ \text{ a.e. $x,y\in \Omega$, $t>0$.}
\end{equation} 
Using Theorem \ref{t4.12}, one obtains for any $\delta\in(0,1)$ there exist 
finite constants $c_\delta,C_\delta>0$ such that (for $t>0$, a.e.\ $x,y \in \Om$), 
\begin{equation}\label{6}
e^{\lambda [\varphi(x) - \varphi(y)]} | K_{\Theta, \Om}(t,x,y) | \leq e^{\frac{1}{\delta} \lambda^2 t} 
e^{\delta [\varphi(x) -\varphi(y)]^2/t} C_{\delta} t^{-n/2}  \exp \big[- c_{\delta} |x-y|^2/t\big].  
 \end{equation}
Choosing $\delta > 0$ sufficiently small and using \eqref{5} one obtains that, 
on the one hand (for $t>0$, a.e.\ $x,y \in \Om$), 
\begin{equation}\label{7}
e^{\lambda [\varphi(x) - \varphi(y)]}  | K_{\Theta, \Om}(t,x,y)|  \le e^{\frac{1}{\delta} \lambda^2 t} 
 C t^{-n/2}.
 \end{equation}
On the other hand, the semigroup $S_\lambda(t)$ is associated with the sesquilinear form 
\begin{equation} 
\gQ_{\Theta, \Om}\big(e^{-\lambda \varphi}u, e^{\lambda \varphi}v\big), \quad u, v \in H^1(\Om), 
\end{equation} 
where $\gQ_{\Theta, \Om}$ is the form of $L_{\Theta, \Om}$. One verifies that 
\begin{align}
\gQ_{\Theta, \Om}\big(e^{-\lambda \varphi}u, e^{\lambda \varphi}u\big) 
&= \sum_{j,k = 1}^n \int_\Om d^n x \, a_{j,k} \overline{\partial_j u} \partial_k u 
- \lambda^2 \sum_{j,k = 1}^n \int_\Om d^n x \, a_{j,k} \partial_j \varphi \partial_k  \varphi |u |^2    \no \\
& \quad + \langle \gamma_D (e^{-\lambda \varphi}u),  \Theta \gamma_D (e^{-\varphi} u) \rangle_{1/2}, 
\quad u \in H^1(\Om).
\end{align}
Thus, using \eqref{5} and  \eqref{4},
\begin{align}  
&\Re \big(\gQ_{\Theta, \Om}\big(e^{-\lambda \varphi}u, e^{\lambda \varphi}u\big)\big) \geq 
\gQ_{\Theta, \Om}(u,u) - \lambda^2 \int_\Om d^n x \, | u |^2 
\geq (\lambda_{1,\Theta,\Omega} - \lambda^2)\int_\Om d^n x \, | u |^2,     \no \\[1mm] 
& \hspace*{9.2cm} u\in H^1(\Omega).
\end{align} 
The latter inequality implies the following estimate 
\begin{equation}\label{8}
\| S_\lambda(t) \|_{\cB(L^2(\Omega; d^nx))} \le e^{- (\lambda_{1,\Theta,\Omega} -\lambda^2)t}.
\end{equation} 
Now we proceed as in the beginning of the poof of Theorem \ref{t4.12}. 
Estimates \eqref{7}, \eqref{8}, and \cite[Lemma\ 6.5]{Ou05} imply (for $t>0$, a.e.\ $x,y \in \Om$), 
\begin{equation}\label{9}
e^{\lambda [\varphi(x) - \varphi(y)]}  | K_{\Theta, \Om}(t,x,y)|  \le Ct^{-n/2} e^{-\lambda_{1,\Theta,\Om} t} e^{\lambda^2 t} \big[1 + \lambda_{1,\Theta,\Om} t + \delta' \lambda^2 t\big]^{n/2},
\end{equation}
with $\delta' := \delta^{-1} -1$. We arrive at (for $t>0$, a.e.\ $x,y \in \Om$), 
\begin{equation} 
| K_{\Theta, \Om}(t,x,y) |   \le C' t^{-n/2} e^{-\lambda_{1,\Theta,\Om} t} e^{\lambda^2 t} 
\big[1 + \lambda_{1,\Theta,\Om} t +  \lambda^2 t\big]^{n/2} e^{ - \lambda [\varphi(x) - \varphi(y)]}.
\end{equation} 
Choosing $\lambda = [\varphi(x) - \varphi(y)]/(2t)$ and optimizing over $\varphi$ yields \eqref{10}. 
\end{proof}

One notes that while condition \eqref{4} may not be automatically satisfied  
in the presence of nonlocal Robin boundary conditions, it is certainly 
fulfilled in the case of local Robin boundary conditions.

\section{Some Illustrations}  \lb{s3}

We conclude this note with a number of concrete examples illustrating Theorem \ref{t4.12}. 

Assuming throughout this section that $n\geq 3$, one recalls that 
\begin{equation}\label{Riah-1}
E_n(x):= \frac{1}{(n-2)\omega_{n-1}}\frac{1}{|x|^{n-2}},   \quad 
x\in{\mathbb{R}}^n \backslash \{0\},
\end{equation} 
is the fundamental solution for (minus) the Laplacian $- \Delta$ in ${\mathbb{R}}^n$. Here 
$\omega_{n-1}=2\pi^{n/2}/\Gamma(n/2)$ ($\Gamma(\dott)$ the Gamma function, 
cf.\ \cite[Sect.\ 6.1]{AS72}) represents the area of the unit sphere $S^{n-1}$ in $\bbR^n$. 

Next, suppose that $\Omega\subseteq{\mathbb{R}}^n$ is a Lipschitz domain with compact 
boundary, and denote by $\omega$ the canonical surface measure on $\partial\Omega$. 
Then the (boundary-to-boundary version of the) harmonic single layer associated
with $\Omega$ is the integral operator of formal convolution with $E_n$, that is, 
\begin{equation}\label{Riah-2}
(Sf)(\xi):= - \int_{\partial\Omega} d^{n-1} \omega (\eta) \, E_n(\xi - \eta)f(\eta),  \quad \xi \in\partial\Omega.
\end{equation} 
One observes that in the special case where $\partial\Omega\in C^\infty$, it follows that $S$ is a 
classical pseudodifferential operator of order $-1$. This description of $S$ 
is tightly connected with the strong regularity assumption on the boundary of $\Omega$,
and fails to materialize in the presence of just one boundary irregularity. 
Nonetheless, $S$  continues to enjoy remarkable properties even when considered 
on rough surfaces, as in the presently assumed Lipschitz setting. 
Some of its basic properties relevant for us here are as follows:
\begin{align}\label{Riah-3}
\begin{split} 
& S:L^2(\partial\Omega; d^{n-1} \omega)\longrightarrow L^2(\partial\Omega; d^{n-1} \omega)
\, \text{ is linear, compact}, \\
& \quad \text{nonnegative, injective, with range $H^1(\partial\Omega)$}.
\end{split}
\end{align}
Functional calculus then yields that 
\begin{equation}\label{Riah-4}
\text{$S^{i\gamma}$ is a unitary operator on $L^2(\partial\Omega; d^{n-1} \omega)$  
for each $\gamma\in{\mathbb{R}}$.}
\end{equation} 
Also, starting from the fact that  
\begin{equation}\label{Riah-5}
S:L^2(\partial\Omega; d^{n-1} \omega)\longrightarrow H^1(\partial\Omega)
\, \text{ is a linear, bounded, isomorphism},
\end{equation} 
by duality and interpolation we obtain that  
\begin{equation}\label{Riah-6}
S:H^{s-1}(\partial\Omega)\longrightarrow  H^s(\partial\Omega)
\, \text{ is a linear, bounded, isomorphism.} 
\end{equation} 
Functional calculus may be also used to define complex and fractional powers of $S$.

\begin{lemma}\label{lUTD-1}
For each $\alpha\in[0,1]$ and $s\in[0,\alpha]$,   
\begin{equation}\label{Riah-7}
S^{-\alpha}:H^s(\partial\Omega)\longrightarrow  H^{s-\alpha}(\partial\Omega)
\, \text{ is a linear, bounded, isomorphism}.
\end{equation}
\end{lemma}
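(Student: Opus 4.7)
The plan is to combine the functional calculus for $S$ with complex interpolation along the Sobolev scale on $\partial\Omega$. To begin, I would view $A:=S^{-1}$ as an unbounded positive self-adjoint operator on $L^2(\partial\Omega;d^{n-1}\omega)$; by \eqref{Riah-5} its domain is precisely $H^1(\partial\Omega)$, with graph norm equivalent to the $H^1$ norm. The fractional powers $A^\alpha = S^{-\alpha}$, defined by the spectral theorem, then satisfy the classical identification
\begin{equation*}
\dom(S^{-\alpha}) = \big[L^2(\partial\Omega;d^{n-1}\omega),\,H^1(\partial\Omega)\big]_\alpha = H^\alpha(\partial\Omega), \quad \alpha\in[0,1],
\end{equation*}
where the first equality reflects the theory of fractional powers of positive self-adjoint operators and the second is the standard complex-interpolation description of Sobolev spaces on a Lipschitz boundary. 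At the endpoint $s=\alpha$ this yields that $S^{-\alpha}\colon H^\alpha(\partial\Omega)\to L^2(\partial\Omega;d^{n-1}\omega)$ is bounded, with bounded two-sided inverse $S^\alpha$ furnished by functional calculus.

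For the opposite endpoint $s=0$, I would use duality to extend $S^{-\alpha}$ beyond $L^2$. Since $S^\alpha$ is self-adjoint on $L^2$, the bounded isomorphism $S^\alpha\colon L^2(\partial\Omega;d^{n-1}\omega)\to H^\alpha(\partial\Omega)$ dualizes (using \eqref{3.2}) to a bounded isomorphism $S^\alpha\colon H^{-\alpha}(\partial\Omega)\to L^2(\partial\Omega;d^{n-1}\omega)$, whose inverse is the desired extension $S^{-\alpha}\colon L^2(\partial\Omega;d^{n-1}\omega)\to H^{-\alpha}(\partial\Omega)$.

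Finally, for intermediate $s\in(0,\alpha)$, I would interpolate between the two endpoints just established. Setting $\theta:=s/\alpha\in(0,1)$, the identifications $[L^2(\partial\Omega),H^\alpha(\partial\Omega)]_\theta = H^s(\partial\Omega)$ and $[H^{-\alpha}(\partial\Omega),L^2(\partial\Omega)]_\theta = H^{s-\alpha}(\partial\Omega)$, combined with the boundedness already established at $\theta=0$ and $\theta=1$, deliver the boundedness of $S^{-\alpha}\colon H^s(\partial\Omega)\to H^{s-\alpha}(\partial\Omega)$; interpolating $S^\alpha$ in the reverse direction furnishes its two-sided inverse.

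The main obstacle I anticipate is a clean appeal to the two interpolation ingredients in the Lipschitz category: the Komatsu--Heinz identity $\dom(A^\alpha)=[L^2,\dom(A)]_\alpha$ for positive self-adjoint $A$, and the identification of complex interpolation spaces within the Sobolev scale on the boundary of a Lipschitz domain (typically obtained via bi-Lipschitz transfer from Euclidean space in local boundary charts). Both results are well established, but they must be cited carefully so that the two scales genuinely coincide with equivalent norms, rather than merely embedding continuously into one another.
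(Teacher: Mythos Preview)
Your argument is correct and reaches the same conclusion as the paper, but the route differs at the first step. The paper establishes that $S^{\alpha}:L^2(\partial\Omega)\to H^{\alpha}(\partial\Omega)$ is an isomorphism by applying Stein's complex interpolation theorem to the analytic family $\{S^{z}\}_{0\le\Re z\le 1}$, using the unitarity of $S^{i\gamma}$ on $L^2$ (equation \eqref{Riah-4}) at $\Re z=0$ and the isomorphism \eqref{Riah-5} at $\Re z=1$. You instead invoke the abstract domain-interpolation identity $\dom(S^{-\alpha})=[L^2,\dom(S^{-1})]_{\alpha}$ for positive self-adjoint operators, combined with the Sobolev interpolation $[L^2,H^1(\partial\Omega)]_{\alpha}=H^{\alpha}(\partial\Omega)$. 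Both approaches need the latter identification on Lipschitz boundaries; yours trades the machinery of analytic families for a single operator-theoretic black box (the Heinz--Komatsu/Lions identity), which is arguably cleaner if that result is taken for granted. After this first step, the two proofs converge: duality to reach the $s=0$ endpoint and complex interpolation in $s$ to fill in $s\in(0,\alpha)$, exactly as the paper does (in one sentence) for $S^{\alpha}$ before taking inverses.
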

\begin{proof}
In a first stage, we shall show that   
\begin{align}\label{Riah-8}
\begin{split}
& S^{\alpha}:L^2(\partial\Omega; d^{n-1} \omega)\longrightarrow  H^\alpha(\partial\Omega)
\, \text{ is a linear, bounded}, \\ 
& \quad \text{isomorphism, for each $\alpha\in[0,1]$.}
\end{split}
\end{align} 
To justify this, note that the family of operators 
$\{S^z\}_{z \in \bbS}$ (with $\bbS = \{z\in \bbC \,|\, 0 \leq \Re(z) \leq 1\}$ denoting a closed 
complex strip), depends analytically on the parameter $z$ in the interior of $\bbS$,
when viewed as a mapping with values in $\cB\big(L^2(\partial\Omega; d^{n-1} \omega)\big)$,
and   
\begin{align}\label{Riah-9}
\begin{split}
& S^z:L^2(\partial\Omega; d^{n-1} \omega)\longrightarrow  H^{\Re(z)}(\partial\Omega)
\, \text{ is a linear, bounded},  \\ 
& \quad \text{isomorphism when $\Re(z)=0$ or $\Re(z)=1$,} 
\end{split}
\end{align} 
due to \eqref{Riah-4} and \eqref{Riah-5}. Then \eqref{Riah-8} follows
from this and Stein's complex interpolation theorem for analytic families
of operators. Having established \eqref{Riah-8}, via duality and interpolation, 
one then obtains that 
\begin{align}\label{Riah-10}
\begin{split}
& S^\alpha:H^{s-\alpha}(\partial\Omega)\longrightarrow  H^s(\partial\Omega)
\, \text{ is a linear, bounded}, \\
& \quad \text{isomorphism whenever $\alpha\in[0,1]$ and $s\in[0,\alpha]$.}
\end{split}
\end{align} 
Taking inverses, this finally yields \eqref{Riah-7}.
\end{proof}

The next step is to prove a nondegeneracy condition for fractional powers
of the harmonic single layer, in the spirit of \eqref{0}.

\begin{lemma}\label{lUTD-2}
If $1$ denotes the constant function $1$ in $\Omega$, then for each $\alpha\in[0,1]$
one has 
\begin{equation}\label{Riah-12}
\big\langle\gamma_D 1,S^{-\alpha}\gamma_D 1 \big\rangle_{1/2}>0.
\end{equation}
\end{lemma}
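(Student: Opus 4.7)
The plan is to reduce the expression $\langle \gamma_D 1, S^{-\alpha}\gamma_D 1\rangle_{1/2}$ to a squared $L^2(\partial\Omega; d^{n-1}\omega)$ norm via the functional calculus for $S$, and then exploit injectivity of $S$ together with the fact that the constant function $1$ is a nonzero element of $L^2(\partial\Omega; d^{n-1}\omega)$.

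First, I would observe that by \eqref{Riah-3}, $S$ is a compact, nonnegative, injective, self-adjoint operator on $L^2(\partial\Omega; d^{n-1}\omega)$, so the spectral theorem produces a (generally unbounded) self-adjoint operator $S^{-\alpha/2}$ on $L^2(\partial\Omega; d^{n-1}\omega)$ for every $\alpha\in[0,1]$. Next, since $\partial\Omega$ is compact and $\gamma_D 1$ is the constant function $1$ on $\partial\Omega$, one has $\gamma_D 1 \in H^s(\partial\Omega)$ for every $s\geq 0$. Applying Lemma \ref{lUTD-1} with the parameter $\alpha$ replaced by $\alpha/2$ and $s=\alpha/2$ gives that $S^{-\alpha/2} \colon H^{\alpha/2}(\partial\Omega)\to L^2(\partial\Omega; d^{n-1}\omega)$ is bounded; in particular, $\gamma_D 1 \in \dom(S^{-\alpha/2})$ and $S^{-\alpha/2}\gamma_D 1 \in L^2(\partial\Omega; d^{n-1}\omega)$. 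Applying the lemma again (with $s=\alpha$) similarly shows that $S^{-\alpha}\gamma_D 1 \in L^2(\partial\Omega; d^{n-1}\omega)$.

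With these regularity facts in hand, the duality pairing reduces to the ordinary $L^2$-inner product via \eqref{3.3}, and the composition law $S^{-\alpha}=S^{-\alpha/2}S^{-\alpha/2}$ together with self-adjointness of $S^{-\alpha/2}$ yields
\begin{equation}
\big\langle \gamma_D 1, S^{-\alpha}\gamma_D 1 \big\rangle_{1/2}
= \big(\gamma_D 1, S^{-\alpha}\gamma_D 1\big)_{L^2(\partial\Omega; d^{n-1}\omega)}
= \big\| S^{-\alpha/2}\gamma_D 1 \big\|_{L^2(\partial\Omega; d^{n-1}\omega)}^2 \geq 0.
\end{equation}

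Finally, to upgrade to strict positivity, I would invoke the injectivity of $S$ from \eqref{Riah-3}, which transfers through the functional calculus to injectivity of $S^{-\alpha/2}$ on its domain. Since the canonical surface measure $\omega$ satisfies $\omega(\partial\Omega)>0$, the constant function $1$ is a nonzero element of $L^2(\partial\Omega; d^{n-1}\omega)$, hence $S^{-\alpha/2}\gamma_D 1\neq 0$ in $L^2(\partial\Omega; d^{n-1}\omega)$, and \eqref{Riah-12} follows. The only subtle point I expect is verifying that the abstract duality pairing in \eqref{3.3} genuinely coincides with the $L^2$-inner product at the indicated level of regularity; this is where the mapping properties in Lemma \ref{lUTD-1} do the real work, but it is conceptually routine rather than a substantive obstacle.
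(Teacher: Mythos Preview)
Your argument is correct and essentially identical to the paper's: both proofs split $S^{-\alpha}=S^{-\alpha/2}S^{-\alpha/2}$, use self-adjointness to rewrite the pairing as $\|S^{-\alpha/2}\gamma_D 1\|_{L^2(\partial\Omega)}^2$, and invoke Lemma~\ref{lUTD-1} for the needed regularity. The only cosmetic difference is that the paper obtains the nonvanishing of $S^{-\alpha/2}\gamma_D 1$ from the isomorphism property in Lemma~\ref{lUTD-1}, whereas you phrase it via injectivity of $S$ and functional calculus---these are equivalent here.
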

\begin{proof}
Since $\gamma_D 1\in H^s(\partial\Omega)$ for each $s\in[0,1]$, 
from \eqref{Riah-7} (used with $s=\alpha$) one deduces that   
\begin{equation}\label{Riah-13}
S^{-\alpha}\gamma_D 1\in L^2(\partial\Omega; d^{n-1} \omega) \, \text{ for each } \, 
\alpha\in[0,1].
\end{equation} 
Next, fix $\alpha\in[0,1]$. Then on account of the self-adjointness of $S$ 
and \eqref{Riah-13} one can write  
\begin{equation}\label{Riah-14}
\big\langle\gamma_D 1,S^{-\alpha}\gamma_D 1\big\rangle_{1/2}
= \big\langle S^{-\alpha/2}\gamma_D 1,S^{-\alpha/2}\gamma_D 1\big\rangle_{1/2}
=\int_{\partial\Omega} d^{n-1} \omega \, |S^{-\alpha/2}\gamma_D 1|^2.
\end{equation} 
Given that Lemma \ref{lUTD-1} ensures that the function 
$S^{-\alpha/2}\gamma_D 1$ is not identically zero on $\partial\Omega$, 
the claim in \eqref{Riah-12} now readily follows from \eqref{Riah-14}.
\end{proof}

After this preamble, we are in a position to prove the following result
which identifies a class of highly nonlocal operators satisfying Hypothesis \ref{h3.2}
along with a nondegeneracy condition in the spirit of \eqref{0}. We recall that
$M_{\theta}$ stands for the operator of pointwise multiplication by the measurable 
function $\theta$. 

\begin{theorem}\label{tYRS-1}
Assume $\theta: \partial\Omega \to [0,\infty)$ is a function that is strictly 
positive on a subset of $\partial\Omega$ of positive $d^{n-1} \omega$ measure 
and satisfies $\theta\in L^p(\dOm;d^{n-1}\omega)$, where
\begin{equation}\label{Fpp1}
p=n-1 \,\mbox{ if } \, n>2,  \mbox{ and } \, p\in(1,\infty] \,\mbox{ if } \, n=2. 
\end{equation}
Then for any given number $\delta>0$ there exists $\varepsilon > 0$ with the property 
that for each $\alpha\in[1/2,1)$ the operator   
\begin{equation}\label{Riah-15}
\Theta:= c_1 M_{\theta}+ c_2 S^{-\alpha} + c_3 \varepsilon S^{-1}, \quad 
c_j \geq 0, \; 1 \leq j \leq 3, \quad c_1 c_2 c_3 \neq 0, 
\end{equation} 
satisfies the conditions stipulated in Hypothesis \ref{h3.2} for the given $\delta$, 
as well as the nondegeneracy condition  
\begin{equation}\label{Riah-16}
\langle\gamma_D 1,\Theta\gamma_D 1\rangle_{1/2}\not=0.
\end{equation}
\end{theorem}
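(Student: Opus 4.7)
The plan is to exhibit a decomposition $\Theta = \Theta^{(1)} + \Theta^{(2)} + \Theta^{(3)}$ of the type required by Hypothesis \ref{h3.2}, and then to verify \eqref{Riah-16} termwise.

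Given $\delta > 0$, I first choose $\varepsilon > 0$ so that
$c_3 \varepsilon \|S^{-1}\|_{\cB(H^{1/2}(\dOm), H^{-1/2}(\dOm))} < \delta$;
note that this choice depends only on $\delta$, $c_3$, and $\Omega$, and is independent of $\alpha \in [1/2, 1)$. With $\varepsilon$ so fixed, I assign
$$
\Theta^{(1)} := 0, \qquad
\Theta^{(2)} := c_1 M_{\theta} + c_2 S^{-\alpha}, \qquad
\Theta^{(3)} := c_3 \varepsilon S^{-1}.
$$
Self-adjointness of $\Theta$ as an element of $\cB(H^{1/2}(\dOm), H^{-1/2}(\dOm))$ is immediate from the real-valuedness of $\theta$ combined with the self-adjointness of $S$ on $L^2(\dOm; d^{n-1}\omega)$ together with standard functional calculus applied to the real powers $-\alpha$ and $-1$.

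Next, I match each piece against the corresponding clause of Hypothesis \ref{h3.2}. The choice $\Theta^{(1)} = 0$ is associated with the zero sesquilinear form on $H^{1/2}(\dOm) \times H^{1/2}(\dOm)$, which is trivially closed and bounded below by $0$. For $\Theta^{(2)}$: compactness of $M_{\theta} \colon H^{1/2}(\dOm) \to H^{-1/2}(\dOm)$ is exactly the content of Lemma \ref{l3.6}; for $c_2 S^{-\alpha}$, Lemma \ref{lUTD-1} applied with $s = 1/2$ (which lies in $[0, \alpha]$ since $\alpha \ge 1/2$) gives the boundedness $S^{-\alpha} \colon H^{1/2}(\dOm) \to H^{1/2 - \alpha}(\dOm)$, while the strict inequality $\alpha < 1$ (so that $1/2 - \alpha > -1/2$) makes the Rellich--Kondrachov-type embedding $H^{1/2-\alpha}(\dOm) \hookrightarrow H^{-1/2}(\dOm)$ compact. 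Composition then yields $S^{-\alpha} \in \cB_{\infty}(H^{1/2}(\dOm), H^{-1/2}(\dOm))$. Finally, $\|\Theta^{(3)}\|_{\cB(H^{1/2}(\dOm), H^{-1/2}(\dOm))} < \delta$ holds by the choice of $\varepsilon$.

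For the nondegeneracy condition \eqref{Riah-16}, I use that $\gamma_D 1 \equiv 1$ on $\dOm$ together with the self-adjointness of $S^{\beta}$ for real $\beta$ and Lemma \ref{lUTD-1} to expand
$$
\langle \gamma_D 1, \Theta\, \gamma_D 1 \rangle_{1/2}
= c_1 \int_{\dOm} \theta \, d^{n-1}\omega
+ c_2 \big\|S^{-\alpha/2} \gamma_D 1\big\|_{L^2(\dOm; d^{n-1}\omega)}^2
+ c_3 \varepsilon \big\|S^{-1/2} \gamma_D 1\big\|_{L^2(\dOm; d^{n-1}\omega)}^2.
$$
The first term is strictly positive by the assumption that $\theta \ge 0$ is positive on a subset of $\dOm$ of positive surface measure; the remaining two are strictly positive by Lemma \ref{lUTD-2}. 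Since $c_1 c_2 c_3 \neq 0$ forces $c_j > 0$ for $j = 1, 2, 3$, the sum is strictly positive, giving \eqref{Riah-16}. The only mild obstacle is locating the right slot for the $S^{-1}$ term: because $S^{-1}$ is a bounded \emph{isomorphism} (not a compact operator) from $H^{1/2}(\dOm)$ to $H^{-1/2}(\dOm)$, it cannot be absorbed into $\Theta^{(2)}$, which is precisely why the small coefficient $\varepsilon$ is introduced and $c_3 \varepsilon S^{-1}$ is placed in $\Theta^{(3)}$.
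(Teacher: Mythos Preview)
Your argument is correct and follows essentially the same route as the paper. The only difference is cosmetic: the paper assigns $\Theta^{(1)}:=c_1 M_\theta$, $\Theta^{(2)}:=c_2 S^{-\alpha}$, $\Theta^{(3)}:=c_3\varepsilon S^{-1}$, whereas you set $\Theta^{(1)}:=0$ and lump $c_1 M_\theta$ together with $c_2 S^{-\alpha}$ in the compact slot $\Theta^{(2)}$. Your placement is in fact the more natural one, since Lemma~\ref{l3.6} already gives $M_\theta\in\cB_\infty\big(H^{1/2}(\dOm),H^{-1/2}(\dOm)\big)$ directly; the paper itself uses the choice $\Theta^{(1)}=0$ in the example immediately following the theorem. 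One small caveat: your claim that the zero form on $H^{1/2}(\dOm)\times H^{1/2}(\dOm)$ is ``trivially closed'' is not literally correct (the form norm reduces to the $L^2$ norm, under which $H^{1/2}(\dOm)$ is not complete), but this is a looseness shared by the paper's own treatment of the $\Theta^{(1)}=0$ case and does not affect the substance of the argument.
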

\begin{proof}
Decompose $\Theta=\Theta^{(1)}+\Theta^{(2)}+\Theta^{(3)}$, where  
\begin{equation}\label{Fpgc-1}
\Theta^{(1)}:= c_1 M_{\theta},\quad \Theta^{(2)}:= c_2 S^{-\alpha}, 
\quad\Theta^{(3)}:= c_3 \varepsilon S^{-1}.
\end{equation} 
From Lemma \ref{l3.6} and the fact that $\theta$ is positive on a subset of 
$\partial\Omega$ of positive $d^{n-1} \omega$ measure, it follows that   
\begin{align}\label{Riah-17}
\begin{split}
& \Theta^{(1)}\in\cB\big(H^{1/2}(\dOm),H^{-1/2}(\dOm)\big), \\
& \Theta^{(1)} \, \text{ is self-adjoint in this context and } \, 
\big\langle\gamma_D 1,\Theta^{(1)}\gamma_D 1\big\rangle_{1/2}>0 \, \text{ if $c_1 > 0$.}
\end{split}
\end{align} 
Next, since $S^{-\alpha}$ maps $H^{1/2}(\dOm)$ boundedly into $H^{1/2-\alpha}(\dOm)$ 
by Lemma \ref{lUTD-1} (used here with $s=1/2$ and $\alpha\in[1/2,1)$), and since  
$H^{1/2-\alpha}(\dOm)$ embeds compactly into $H^{1/2}(\dOm)$ given that we 
are currently assuming $\alpha<1$, it follows that  
\begin{equation}\label{Riah-18}
\Theta^{(2)}\in\cB_\infty\big(H^{1/2}(\dOm),H^{-1/2}(\dOm)\big).
\end{equation} 
Moreover, it is clear from the self-adjointness of $S$ that $\Theta^{(2)}$ is 
self-adjoint in the context of $\cB\big(H^{1/2}(\dOm),H^{-1/2}(\dOm)\big)$.
In addition, Lemma \ref{lUTD-2} yields   
\begin{equation}\label{Riah-19}
\big\langle\gamma_D 1,\Theta^{(2)}\gamma_D 1\big\rangle_{1/2}>0  \, \text{ if $c_2 > 0$.}
\end{equation} 
Similar considerations, based on Lemma \ref{lUTD-1} (used with $\alpha=1$ and $s=1/2$) 
and Lemma \ref{lUTD-2} (used with $\alpha=1$), prove that  
\begin{align}\label{Riah-20}
\begin{split}
& \Theta^{(3)}\in\cB\big(H^{1/2}(\dOm),H^{-1/2}(\dOm)\big), \\
& \Theta^{(3)} \, \text{ is self-adjoint in this context and } \, 
\big\langle\gamma_D 1,\Theta^{(3)}\gamma_D 1\big\rangle_{1/2}>0  \, \text{ if $c_3 > 0$.} 
\end{split}
\end{align} 
Finally, it remains to observe that, given any $\delta>0$, 
Lemma \ref{lUTD-1} may be invoked (with $\alpha=1$ and $s=1/2$) in order to 
find a number $\varepsilon>0$ small enough so that  
\begin{equation}\label{Fiyhi}
\big\|\Theta^{(3)}\big\|_{\cB(H^{1/2}(\dOm),H^{-1/2}(\dOm))}<\delta.
\end{equation} 
The above analysis then proves that $\Theta$ from \eqref{Riah-15}
satisfies all conditions in Hypothesis \ref{h3.2} as well as the 
nondegeneracy condition \eqref{Riah-16}.
\end{proof}

Another, more elementary, example of a nonlocal operator satisfying Hypothesis \ref{h3.2}
as well as the nondegeneracy condition \eqref{0} may be produced as follows. We 
retain the assumption that $\Omega\subseteq{\mathbb{R}}^n$ is a Lipschitz domain
with compact boundary and consider a measurable kernel  
\begin{equation}\label{Fi-AA.1}
k:\partial\Omega\times\partial\Omega\to{\mathbb{C}}
\end{equation} 
satisfying the symmetry condition  
\begin{equation}\label{Fi-AA.2}
k(\xi,\eta)=\overline{k(\eta,\xi)} \quad \xi,\eta\in\partial\Omega,
\end{equation} 
and such that 
\begin{align}
& \int_{\partial\Omega}\int_{\partial\Omega} d^{n-1} \omega(\xi) d^{n-1} \omega (\eta) \, 
|k(\xi,\eta)| < +\infty,    \label{Fi-AA.3a} \\
& \int_{\partial\Omega} d^{n-1} \omega (\eta) \left|\int_{\partial\Omega} d^{n-1} \omega (\xi) \, 
k(\xi,\eta)\right|^2 \not=0. \label{Fi-AA.3b}
\end{align}   
Moreover, suppose that the kernel $k$ is sufficiently decent such that the integral operator  
\begin{equation}\label{Fi-AA.4}
(Af)(\xi):=\int_{\partial\Omega} d^{n-1} \omega (\eta) \, k(\xi,\eta)f(\eta),
\quad \xi \in \partial\Omega,
\end{equation} 
satisfies  
\begin{equation}\label{RTjn}
A\in\cB_\infty\big(H^{1/2}(\dOm),L^2(\dOm; d^{n-1} \omega)\big).
\end{equation} 
For example, condition \eqref{RTjn} holds if $A$ is compact on $L^2(\partial\Omega; d^{n-1} \omega)$, 
which is always the case if $k$ satisfies the stronger Hilbert--Schmidt condition 
\begin{equation}\label{Fi-AA.5}
\int_{\partial\Omega}\int_{\partial\Omega} d^{n-1} \omega (\xi) d^{n-1} \omega (\eta) \, 
|k(\xi,\eta)|^2 < +\infty
\end{equation} 
in place of condition \eqref{Fi-AA.3a}.
Assuming that \eqref{Fi-AA.4} holds, one then deduces that 
\begin{equation}\label{RTjn-2}
A^*\in\cB_\infty\big(L^2(\dOm; d^{n-1} \omega),H^{-1/2}(\dOm)\big) 
\end{equation} 
and, ultimately, that 
\begin{equation}\label{RTjn-3}
\Theta:=A^*A\in\cB_\infty\big(H^{1/2}(\dOm),H^{-1/2}(\dOm)\big). 
\end{equation} 
Moreover, the linear operator $\Theta$ defined in \eqref{RTjn-3} is self-adjoint 
in the context of $\cB\big(H^{1/2}(\dOm),H^{-1/2}(\dOm)\big)$ and  
\begin{equation}\label{Riah-21}
\langle\gamma_D 1,\Theta\gamma_D 1\rangle_{1/2}
=\int_{\partial\Omega} d^{n-1} \omega \, |A\gamma_D 1|^2 
=\int_{\partial\Omega} d^{n-1} \omega (\eta) \left|\int_{\partial\Omega} 
d^{n-1} \omega (\xi) \, k(\xi,\eta)\right|^2 \not = 0,
\end{equation} 
by condition \eqref{Fi-AA.3b}. The bottom line is that
that the operator $\Theta$ in \eqref{RTjn-3} satisfies Hypothesis \ref{h3.2}
(taking $\Theta^{(1)}=\Theta^{(3)}=0$), as well as the 
nondegeneracy condition \eqref{Riah-16}.

\medskip



\end{document}